\numberwithin{equation}{section}
\newtheorem{theorem}{Theorem}[section]
\newtheorem{proposition}[theorem]{Proposition}
\newtheorem{lemma}[theorem]{Lemma}
\newtheorem{corollary}[theorem]{Corollary}
\theoremstyle{definition}
\newtheorem{definition}[theorem]{Definition}
\newtheorem{problem}[theorem]{Problem}
\theoremstyle{remark}
\newtheorem{remark}[theorem]{Remark}
\newcommand{\Z}{\mathbb{Z}}
\newcommand{\R}{\mathbb{R}}
\newcommand{\F}{\mathbb{F}}
\newcommand{\Tor}{\operatorname{Tor}}
\newcommand{\lk}{\operatorname{lk}}
\title{Golod and tight 3-manifolds}
\author{Kouyemon Iriye}
\address{Department of Mathematical Sciences, Osaka Prefecture University, Sakai, 599-8531, Japan}
\email{kiriye@mi.s.osakafu-u.ac.jp}
\author{Daisuke Kishimoto}
\address{Department of Mathematics, Kyoto University, Kyoto, 606-8502, Japan}
\email{kishi@math.kyoto-u.ac.jp}
\subjclass[2010]{Primary 57Q15, Secondary 13F55, 55U10}
\keywords{Golod complex, tight triangulation, polyhedral product, fat-wedge filtration}
\begin{document}

  \maketitle

  \baselineskip.525cm

  \begin{abstract}
    The notions Golodness and tightness for simplicial complexes come from algebra and geometry, respectively. We prove these two notions are equivalent for 3-manifold triangulations, through a topological characterization of a polyhedral product for a tight-neighborly manifold triangulation of dimension $\ge 3$.
  \end{abstract}


  \section{Introduction}\label{introduction}

  Let $\F$ be a field, and let $S=\F[x_1,\ldots,x_m]$, where we assume each $x_i$ is of degree 2. Serre \cite{S} proved that for $R=S/I$ where $I$ is a homogeneous ideal of $S$, there is a coefficientwise inequality
  \[
    P(\Tor^R(\F,\F);t)\le\frac{(1+t^2)^m}{1-t(P(\Tor^S(R,\F);t)-1)}
  \]
  where $P(V;t)$ denotes the Poincar\'e series of a graded vector space $V$. In the extreme case that the equality holds, $R$ is called \emph{Golod}. It was Golod who proved that $R$ is Golod if and only if all products and (higher) Massey products in the Koszul homology of $R$ vanish, where the Koszul homology of $R$ is isomorphic with $\Tor^S(R,\F)$ as a vector space.

  Let $K$ be a simplicial complex with vertex set $[m]=\{1,2,\ldots,m\}$. Let $\F[K]$ denote the Stanley-Reisner ring of $K$ over $\F$, where we assume generators of $\F[K]$ are of degree 2. Then $\F[K]$ expresses combinatorial properties of $K$, and conversely, it is of particular interest to translate a given algebraic property of the Stanley-Reisner ring $\F[K]$ into a combinatorial property of $K$. We say that $K$ is \emph{$\F$-Golod} if $\F[K]$ is Golod. We aim to characterize Golod complexes combinatorially.

  Recently, a new approach to a combinatorial characterization of Golod complexes is taken. We can construct a space $Z_K$, called the \emph{moment-angle complex} for $K$, in accordance with the combinatorial information of $K$. Then combinatorial properties are encoded on the topology of $Z_K$, and in particular, Golodness can be read from a homotopical property of $Z_K$ as follows. Baskakov, Buchstaber and Panov \cite{BBP} proved that the cohomology of $Z_K$ with coefficients in $\F$ is isomorphic with the Koszul homology of $\F[K]$, where the isomorphism respects products and (higher) Massey products. Then it follows that $K$ is Golod over any field whenever $Z_K$ is a suspension, and so Golod complexes have been studied also in connection with desuspension of $Z_K$ and a more general \emph{polyhedral product} \cite{GPTW,GT,GW,IK0,IK1,IK2,IK3,IK4,IK5}. See a survey \cite{BBC} for more information about moment-angle complexes and polyhedral products. Here we remark that there is a Golod complex $K$ such that $Z_K$ is not a suspension as shown by Yano and the first author \cite{IY}.

  In \cite{IK1,IK3,IK4}, the authors characterized Golod complexes of dimension one and two in terms of both combinatorial properties of $K$ and desuspension of $Z_K$. Here we recall a characterization  of Golodness of a closed connected surface triangulation, proved in \cite{IK1}. Recall that a simplicial complex is called \emph{neighborly} if every pair of vertices forms an edge.

  \begin{theorem}
    \label{surface}
    Let $S$ be a triangulation of a closed connected $\F$-orientable surface. Then the following statements are equivalent:
    \begin{enumerate}
      \item $S$ is $\F$-Golod;

      \item $S$ is neighborly;

      \item $Z_S$ is a suspension.
    \end{enumerate}
  \end{theorem}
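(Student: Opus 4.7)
My plan is to prove the three implications $(3)\Rightarrow(1)$, $(1)\Rightarrow(2)$, and $(2)\Rightarrow(3)$ in turn, with most of the substance in the last. The main tools are the Baskakov--Buchstaber--Panov identification of $H^*(Z_S;\F)$ with the Koszul homology of $\F[S]$ as a ring with Massey products, Hochster's decomposition $\widetilde{H}^*(Z_S;\F)\cong\bigoplus_{I\subseteq[m]}\widetilde{H}^{*-|I|-1}(S_I;\F)$ under which the cup product is induced by the join inclusions $S_I\ast S_J\hookrightarrow S_{I\cup J}$ for disjoint $I,J$, and the fat-wedge filtration of $Z_S$ developed in the authors' earlier work.

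For $(3)\Rightarrow(1)$, any suspension has vanishing reduced cup products and higher Massey products in cohomology, so Baskakov--Buchstaber--Panov delivers Golodness. For $(1)\Rightarrow(2)$ I would argue the contrapositive: if $\{i,j\}$ is a missing edge then $S_{\{i,j\}}$ is two isolated points, producing a nonzero class $\alpha\in\widetilde{H}^0(S_{\{i,j\}})$. To build a nontrivial cup product I would pair $\alpha$ with a class $\beta$ coming from a full subcomplex $S_J$ with $J$ disjoint from $\{i,j\}$, using the fundamental class and Poincar\'e--Lefschetz duality on the surface $S$ to ensure $\beta$ exists and that the join inclusion $S_{\{i,j\}}\ast S_J\hookrightarrow S_{\{i,j\}\cup J}$ is nontrivial on the relevant cohomology. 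This yields a nonzero product in $H^*(Z_S;\F)$, contradicting Golodness.

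The hard direction is $(2)\Rightarrow(3)$. The strategy is to climb the fat-wedge filtration of $Z_S$ and show that all attaching maps, which are higher Whitehead-type products indexed by minimal non-faces of $S$, are null-homotopic, so that $Z_S$ desuspends stage by stage until one obtains an explicit homotopy equivalence $Z_S\simeq\Sigma X$. Neighborliness gives the starting input, namely that the $1$-skeleton of $S$ is the complete graph on $[m]$ so that no Whitehead product can be supported on just two vertices. The main obstacle is to push this vanishing past the $1$-skeleton: for each minimal non-face of higher size one must exploit the closed $\F$-orientable surface structure of $S$, in particular the cohomological duality between $S_I$ and $S_{[m]\setminus I}$, to show that the relevant higher Whitehead product factors through a contractible subcomplex. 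This inductive vanishing is the main technical hurdle, and is where the restriction to a $2$-dimensional orientable surface triangulation truly enters.
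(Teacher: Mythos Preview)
The paper does not actually prove Theorem~\ref{surface}: it is quoted from the authors' earlier work \cite{IK1} and used only as motivation. So there is no ``paper's own proof'' to compare against here. That said, your outline is broadly compatible with the machinery the present paper develops for the $3$-dimensional case, with one correct part, one part that matches the paper's general methods, and one part that is too vague to count as a proof.

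Your $(3)\Rightarrow(1)$ is fine and is exactly the standard argument the paper invokes in Section~\ref{introduction}. Your $(1)\Rightarrow(2)$ is essentially a special case of what the paper proves in Theorem~\ref{preGolod} via Lemma~\ref{Poincare}: weak $\F$-Golodness of a closed $\F$-orientable manifold triangulation forces $\F$-tightness, and $\F$-tightness forces neighborliness (Lemma~\ref{tight neighborly}). Your Poincar\'e--Lefschetz pairing idea is precisely the content of Lemma~\ref{Poincare}, so this part is on solid ground, though you should route it through tightness rather than trying to exhibit a single product directly.

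The gap is in $(2)\Rightarrow(3)$. First, two imprecisions: the relevant fat-wedge filtration is that of the \emph{real} moment-angle complex $\R Z_S$, not of $Z_S$ itself (triviality for $\R Z_S$ then gives the desuspension of $Z_S$ via Theorem~\ref{FWF decomposition}); and the attaching maps $\varphi_{S_I}$ are indexed by \emph{all} nonempty subsets $I\subset[m]$, not by minimal non-faces. Minimal non-faces enter only through Lemma~\ref{FWF minimal non-face}, which says $\varphi_{S_I}$ factors through $|S_I|\to|\overline{S_I}|$. Second, and more seriously, your inductive step (``the relevant higher Whitehead product factors through a contractible subcomplex'') is asserted but not argued. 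In the $3$-manifold case the paper needs the whole of Sections~\ref{F(M)} and~\ref{proof} --- the structure of vertex links as connected sums of simplex boundaries, the auxiliary complex $F(M)$, the homology computation of Proposition~\ref{homology}, and the Hopf-degree/injectivity argument of Lemmas~\ref{Hopf}--\ref{injective} --- to get the analogous vanishing. For surfaces the argument in \cite{IK1} is of a similar flavor but is not the one-line reduction you suggest; in particular, for $I\subsetneq[m]$ the subcomplex $|S_I|$ can still be $2$-dimensional, so you cannot simply cite a connectivity gap as in Proposition~\ref{I<m}. As written, your $(2)\Rightarrow(3)$ is a plan rather than a proof.
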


  We introduce another notion of a simplicial complex coming from geometry. S.-S. Chern and R.K. Lashof proved that the total absolute curvature of an immersion $f\colon M\to\R^n$ of a compact manifold $M$ is bounded below by the Morse number of some Morse function on $M$. On the other hand, the Morse number is bounded below by the Betti number. Tightness of an immersion $f$ is defined by the equality between the total absolute curvature of an immersion $f$ and the Betti number of $M$, which is the case that the total absolute curvature is minimal and attained by the Betti number. See \cite{KL,Kui}. It is known that an immersion $f$ is tight if and only if for almost every closed half-space $H$, the inclusion $f(M)\cap H\to f(M)$ is injective in homology.

  Tightness of a simplicial complex is defined as a combinatorial analog of tightness of an immersion. See \cite{KL} for details. Let $K$ be a simplicial complex with vertex set $[m]$. For $\emptyset\ne I\subset[m]$, the full subcomplex of $K$ over $I$ is defined by
  \[
    K_I=\{\sigma\in K\mid\sigma\subset I\}.
  \]

  \begin{definition}
    Let $K$ be a connected simplicial complex with vertex set $[m]$. We say that $K$ is \emph{$\F$-tight} if the natural map $H_*(K_I;\F)\to H_*(K;\F)$ is injective for each $\emptyset\ne I\subset[m]$.
  \end{definition}

  Golodness and tightness have origins in different fields of mathematics, algebra and geometry, respectively. The aim of this paper is to prove the seemingly irrelevant these two notions are equivalent for 3-manifold triangulations through the topology of $Z_K$ or a more general \emph{polyhedral product} (see Section \ref{polyhedral product}). Now we state the main theorem.

  \begin{theorem}
    \label{main}
    Let $M$ be a triangulation of a closed connected $\F$-orientable 3-manifold. Then the following statements are equivalent:
    \begin{enumerate}
      \item $M$ is $\F$-Golod;

      \item $M$ is $\F$-tight;

      \item $Z_M$ is a suspension.
    \end{enumerate}
  \end{theorem}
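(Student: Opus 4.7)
I would prove the equivalence via the cycle $(3)\Rightarrow(1)\Rightarrow(2)\Rightarrow(3)$. The implication $(3)\Rightarrow(1)$ is formal and dimension-independent: Baskakov--Buchstaber--Panov give an isomorphism from the Koszul homology of $\F[M]$ to $H^*(Z_M;\F)$ that preserves cup and higher Massey products, and a suspension has trivial cup and higher Massey products in reduced cohomology, which is exactly the Golod condition.

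For $(1)\Rightarrow(2)$ I would use Hochster's formula, which presents $\Tor^S(\F[M],\F)$ as $\bigoplus_{I\subset[m]}\tilde H^{*}(M_I;\F)$ with multiplication induced by the simplicial maps $M_{I\sqcup J}\to M_I*M_J$ for disjoint $I,J$. Golodness forces all such products (and higher Massey products) to vanish. In degree zero this already yields neighborliness. For the remaining degrees I would use that $M$ is a closed $\F$-orientable 3-manifold: any proper full subcomplex $M_I$ has $H_3(M_I)=0$, so only injectivity in degrees $1$ and $2$ needs attention, and Poincar\'e--Lefschetz duality pairing $M_I$ against $M_{[m]\setminus I}$ together with the vanishing of products/Massey products converts the Hochster information into the desired injectivity $H_*(M_I;\F)\to H_*(M;\F)$.

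The heart of the proof is $(2)\Rightarrow(3)$. My plan is first to invoke the structure theory of $\F$-tight 3-manifold triangulations (after Bagchi--Datta) to upgrade $\F$-tightness to \emph{tight-neighborliness}, i.e.\ $M$ attains equality in K\"uhnel's lower bound on its $f$-vector, so in particular each vertex link is a stacked 2-sphere. Next I would apply the paper's topological characterization of the polyhedral product for a tight-neighborly manifold triangulation of dimension $\ge 3$, which should describe $Z_M$ up to homotopy explicitly and, in the 3-dimensional case, exhibit it as a suspension. The natural vehicle is the fat-wedge filtration $Z_M^1\subset Z_M^2\subset\cdots\subset Z_M^{m-1}=Z_M$: for each missing face $\sigma\notin M$ one argues that the corresponding attaching map is null-homotopic, and collecting these null-homotopies splits $Z_M$ off as a suspension.

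The main obstacle is producing these null-homotopies uniformly. Tight-neighborliness is precisely what rigidifies the combinatorics of links and minimal non-faces enough for the attaching maps to split off consistently, and the hypothesis $\dim M\ge 3$ is what supplies the ambient connectivity needed for the null-homotopies to exist. Once the fat-wedge analysis is completed, $Z_M$ is manifestly a suspension and the cycle closes.
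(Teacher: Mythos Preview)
Your plan follows the paper's route: the same cycle $(3)\Rightarrow(1)\Rightarrow(2)\Rightarrow(3)$, with $(1)\Rightarrow(2)$ done exactly as in the paper via Hochster's description plus Poincar\'e--Lefschetz duality (the paper's Theorem~\ref{preGolod}), and $(2)\Rightarrow(3)$ by first upgrading $\F$-tightness to tight-neighborliness using Bagchi--Datta--Spreer and then running a fat-wedge filtration argument. Two technical points you should adjust when you write it up: the paper works with the fat-wedge filtration of the \emph{real} moment-angle complex $\R Z_M$ (which is a genuine cone decomposition, Theorem~\ref{FWF RZ}) and transfers triviality to $Z_M$ via Theorem~\ref{FWF decomposition}, and the attaching maps $\varphi_{M_I}\colon|M_I|\to\R Z_{M_I}^{|I|-1}$ are indexed by \emph{all} nonempty $I\subset[m]$, not by missing faces; the null-homotopies are obtained by factoring each $\varphi_{M_I}$ through an auxiliary complex $F(M)$ built by filling in the minimal non-faces of size $d+1$, and it is the analysis of $F(M)$ (Section~\ref{F(M)}) that actually uses the stacked-link structure coming from tight-neighborliness.
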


  Recall that a $d$-manifold triangulation is called \emph{stacked} if it is the boundary of a $(d+1)$-manifold triangulation whose interior simplices are of dimension $\ge d$. Stacked manifold triangulations have been studied in several directions, and we will use its connection to tightness (Section \ref{tightness}). See \cite{BDS,DM,KL} and references therein for more on stacked manifold triangulations. Bagchi, Datta and Spreer \cite{BDS} (cf. Theorem \ref{tight vs tight-neighborly}) proved that a closed connected $\F$-orientable 3-manifold triangulation is $\F$-tight if and only if it is neighborly and stacked. Then we get the following corollary of Theorem \ref{main}, which enables us to compare with the 2-dimensional case Theorem \ref{surface}.

  \begin{corollary}
    \label{main corollary}
    Let $M$ be a triangulation of a closed connected $\F$-orientable 3-manifold. Then the following statements are equivalent:
    \begin{enumerate}
      \item $M$ is $\F$-Golod;

      \item $M$ is neighborly and stacked;

      \item $Z_M$ is a suspension.
    \end{enumerate}
  \end{corollary}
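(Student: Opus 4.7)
The plan is essentially to combine Theorem \ref{main} with the Bagchi--Datta--Spreer characterization of $\F$-tight 3-manifold triangulations cited just before the corollary (Theorem \ref{tight vs tight-neighborly}). Theorem \ref{main} already supplies the chain of equivalences $(1)\Leftrightarrow\F\text{-tight}\Leftrightarrow(3)$, so the only thing left to do is replace the middle condition ``$M$ is $\F$-tight'' by the combinatorial condition ``$M$ is neighborly and stacked.''

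More precisely, I would argue as follows. First, by Theorem \ref{main}, $(1)$ and $(3)$ are each equivalent to $M$ being $\F$-tight. Second, by the Bagchi--Datta--Spreer theorem (Theorem \ref{tight vs tight-neighborly}), a closed connected $\F$-orientable 3-manifold triangulation is $\F$-tight if and only if it is neighborly and stacked. Splicing these two equivalences together gives $(1)\Leftrightarrow(2)\Leftrightarrow(3)$, which is the statement of the corollary.

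There is no real obstacle here: the content of the corollary is entirely contained in the two results being combined. The only thing to check is that the hypotheses match, and they do --- both Theorem \ref{main} and the Bagchi--Datta--Spreer theorem are stated for closed connected $\F$-orientable 3-manifold triangulations, which is exactly the hypothesis on $M$. The point of stating this as a separate corollary is purely expository, namely to exhibit a purely combinatorial condition (neighborly + stacked) in the place of the tightness condition, so that the 3-dimensional result becomes directly comparable to the surface case Theorem \ref{surface}, where the analogous combinatorial condition is simply neighborliness.
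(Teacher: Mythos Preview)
Your proposal is correct and matches the paper's approach exactly: the paper derives the corollary immediately from Theorem~\ref{main} together with the Bagchi--Datta--Spreer characterization (recorded as the $d=3$ case of Theorem~\ref{tight vs tight-neighborly}), without giving any further argument. Your observation that the hypotheses line up and that the corollary's purpose is expository comparison with Theorem~\ref{surface} is also precisely what the paper says.
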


  We will investigate a relation between Golodness and tightness of $d$-manifold triangulations for $d\ge 3$, not only for $d=3$, through tight-neighborliness. We will prove the following theorem, where Theorem \ref{main} is its special case $d=3$.

  \begin{theorem}
    \label{main general}
    Let $M$ be a triangulation of a closed connected $\F$-orientable $d$-manifold for $d\ge 3$, and consider the following conditions:
    \begin{enumerate}
      \item $M$ is $\F$-Golod;

      \item $M$ is $\F$-tight;

      \item $M$ is tight-neighborly;

      \item the fat-wedge filtration of $\R Z_M$ is trivial.
    \end{enumerate}
    Then there are implications
    \[
      (1)\quad\Longrightarrow\quad(2)\quad\Longleftarrow\quad(3)\quad\Longrightarrow\quad(4)\quad\Longrightarrow\quad(1).
    \]
    Moreover, for $d=3$, the implication $(2)\,\Rightarrow\,(3)$ also holds, so all conditions are equivalent.
  \end{theorem}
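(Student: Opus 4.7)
The plan is to dispatch the five arrows in Theorem \ref{main general} by combining polyhedral product machinery with recent structure theorems for tight triangulations, and to isolate the step that carries the genuinely new content. Two arrows should be essentially citational. For $(3) \Rightarrow (2)$ I would invoke the result of Bagchi--Datta (cf.\ \cite{BDS}) that a tight-neighborly triangulation of a closed $\F$-orientable $d$-manifold is $\F$-tight for $d \ge 3$. For $(4) \Rightarrow (1)$ I would use the standard output of the authors' fat-wedge filtration machinery from \cite{IK3,IK4,IK5}: triviality of the filtration realizes $\R Z_M$ as a wedge of suspensions, so $Z_M$ is itself a suspension and $\F$-Golodness follows from the Baskakov--Buchstaber--Panov product theorem. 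For the closing arrow $(2) \Rightarrow (3)$ in the case $d=3$ I would appeal to Theorem \ref{tight vs tight-neighborly}, which identifies $\F$-tight $3$-manifolds with neighborly stacked ones, and then observe that a neighborly stacked $3$-manifold automatically meets the tight-neighborly $f$-vector bound by a direct Dehn--Sommerville computation.

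The implication $(1) \Rightarrow (2)$ I would prove through the Hochster-type decomposition $H^*(Z_M;\F) \cong \bigoplus_{I \subset [m]} \widetilde{H}^{*-|I|-1}(M_I;\F)$ combined with Poincar\'e duality on $M$. Assuming $\F$-Golodness, every cup product on $H^*(Z_M;\F)$ must vanish. Given a hypothetical non-zero class $\alpha \in \widetilde{H}_k(M_I;\F)$ that dies in $\widetilde{H}_k(M;\F)$, Poincar\'e duality inside $M$ pairs its image with a class $\beta$ representable on a full subcomplex indexed by a set disjoint from $I$; reading $\alpha$ and $\beta$ through the decomposition, the cup product formula for disjoint index sets produces a non-trivial class in the top Hochster summand $\widetilde{H}^{d-1}(M;\F)$, contradicting Golodness. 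This is the content-rich piece of the algebra-to-geometry direction, and the use of $d\ge 3$ and of $\F$-orientability enters precisely here.

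The remaining and, in my view, hardest implication is $(3) \Rightarrow (4)$. The stages of the fat-wedge filtration of $\R Z_M$ are indexed by subsets $I\subset[m]$, and triviality amounts to nullhomotopy of attaching maps built from joins of full subcomplexes $M_I$ with their combinatorial complements. Tight-neighborliness rigidly fixes the $f$-vector of $M$ and, via the structure theory behind Theorem \ref{tight vs tight-neighborly}, forces $M$ to be neighborly with a Walkup-type stacked local structure; I plan to feed this combinatorial rigidity into an induction over the filtration stages, constructing explicit nullhomotopies by exhibiting each relevant $M_I$ as a wedge-like or collapsible piece that is glued along stacked bistellar moves. The main obstacle will be carrying the induction uniformly across all dimensions $d \ge 3$ and all subsets $I$: this requires a precise description of how the stacked local structure restricts to arbitrary full subcomplexes, which is where the genuinely new combinatorial-topological input of the theorem must be produced.
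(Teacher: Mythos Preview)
Your overall architecture matches the paper's, and the citational arrows $(3)\Rightarrow(2)$, $(4)\Rightarrow(1)$, and $(2)\Rightarrow(3)$ for $d=3$ are handled correctly (the last is already contained in Theorem~\ref{tight vs tight-neighborly}, so no separate Dehn--Sommerville step is needed). Your sketch for $(1)\Rightarrow(2)$ is also in essence the paper's argument, though your phrasing ``Poincar\'e duality pairs its image with a class $\beta$'' is muddled since the image in $H_*(M;\F)$ is zero. The paper makes this precise via Lemma~\ref{Poincare}: for a partition $[m]=I\sqcup J$ the boundary map $\partial\colon H_*(M,M_J;\F)\to H_{*-1}(M_J;\F)$ is computed by evaluating against $(\iota_{I,J})_*([M])$, so weak Golodness (triviality of $\iota_{I,J}^*$) forces $\partial=0$ and hence injectivity of $H_*(M_J;\F)\to H_*(M;\F)$. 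One correction: the hypothesis $d\ge 3$ is \emph{not} used for this arrow; Theorem~\ref{preGolod} holds in all dimensions.

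The genuine gap is in $(3)\Rightarrow(4)$. Your plan to ``exhibit each relevant $M_I$ as a wedge-like or collapsible piece glued along stacked bistellar moves'' will not work as stated: the full subcomplexes $M_I$ of a tight-neighborly manifold are not in general collapsible, and no bistellar argument appears. The paper's key new construction, absent from your proposal, is an auxiliary complex $F(M)$ obtained from $M$ by filling all minimal non-faces of cardinality $d+1$. The stacked structure of vertex links (Lemma~\ref{link}) is used to show that each $F(M)_I$, after killing its free fundamental group, becomes $(d-1)$-connected with $H_d$ freely generated by an explicit set $S(M)$ of boundary $(d+1)$-simplices (Proposition~\ref{homology}, Corollary~\ref{g hat}). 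Since $\varphi_{M_I}$ factors through $|M_I|\to\widehat{F}(M)_I$ by Lemma~\ref{FWF minimal non-face}, and $|M_I|$ has dimension $\le d-1$ for $I\subsetneq[m]$, the proper-subset case follows by connectivity (Proposition~\ref{I<m}). The remaining case $I=[m]$ requires a separate Hopf-degree argument (Lemma~\ref{Hopf}) together with $\F$-tightness of $M$ to show injectivity of certain homology maps (Lemma~\ref{injective}, Proposition~\ref{trivial K}). Without the $F(M)$ construction and the resulting factorization, your inductive scheme has no mechanism for producing the nullhomotopies, and the top filtration stage in particular needs an argument of a different character from the rest.
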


  Remarks on Theorem \ref{main general} are in order. Tight-neighborly triangulations of $d$-manifolds for $d\ge 3$ will be defined in Section \ref{tightness}. The space $\R Z_K$ is the real moment-angle complex, and properties of its fat-wedge filtration will be given in Section \ref{polyhedral product}. In particular, we will see that if the fat-wedge filtration of $\R Z_K$ is trivial, then $Z_K$ is a suspension. So Theorem \ref{main} is the special case of Theorem \ref{main general} for $d=3$ as mentioned above. Datta and Murai \cite{DM} proved that if $M$ is tight-neighborly and $d\ge 4$, then it is $\F$-tight and $\beta_i(M;\F)=0$ for $2\le i\le d-2$, where $\beta_i(M;\F)=\dim H_i(M;\F)$ denotes the $i$-th Betti number. So if $\beta_i(M;\F)=0$ for $2\le i\le d-2$ and $d\ge 4$, then all conditions in Theorem \ref{main general} are equivalent, where the triviality of the Betti numbers is necessary because there is a $\F$-tight 9-vertex triangulation of $\mathbb{C}P^2$ for any field $\F$ as in \cite[Example 3.15]{BD1} which is not tight-neighborly.

  The paper is organized as follows. Section \ref{tightness} collects properties of tight and tight-neighborly manifold triangulations that will be needed in later sections. Section \ref{weak Golodness} introduces a weak version of Golodness and proves the weak Golodness implies tightness of orientable manifold triangulations. Section \ref{F(M)} investigates a simplicial complex $F(M)$ constructed from a tight-neighborly $d$-manifold triangulation $M$ for $d\ge 3$, and Section \ref{polyhedral product} recalls the fat-wedge filtration technique for polyhedral products which is the main ingredient in desuspending $Z_K$. Section \ref{proof} applies the results in Sections \ref{F(M)} and \ref{polyhedral product} to prove Theorem \ref{main general}. Finally, Section \ref{problems} poses two problems for a further study of a relationship between Golodness and tightness.\\

  \textit{Acknowledgement:} The first author was supported by JSPS KAKENHI 26400094, and the second author was supported by JSPS KAKENHI 17K05248.


  \section{Tightness}\label{tightness}

  This section collects facts about tight and tight-neighborly manifold triangulations that we will use. As mentioned in Section \ref{introduction}, tightness of a simplicial complex is a discrete analog of a tight space studied in differential geometry with connection to minimality of the total absolute curvature, and tight complexes have been studied mainly for manifold triangulations. First, we show:

  \begin{lemma}
    \label{tight neighborly}
    Every $\F$-tight complex is neighborly.
  \end{lemma}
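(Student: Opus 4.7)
The plan is to test the $\F$-tightness condition on the simplest nontrivial full subcomplexes, namely the ones spanned by a single pair of vertices, and to extract neighborliness from injectivity in degree zero alone. Since $\F$-tightness is defined only for connected complexes, one immediately has $H_0(K;\F)\cong\F$, so any $\F$-linear injection into $H_0(K;\F)$ must have domain of dimension at most one.

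Next, I would argue by contradiction. Suppose $K$ fails to be neighborly; then there exist distinct vertices $i,j\in[m]$ with $\{i,j\}\notin K$. Take $I=\{i,j\}$. The full subcomplex $K_I$ consists of exactly the two isolated vertices $i$ and $j$, so $H_0(K_I;\F)\cong\F^2$. The map $H_0(K_I;\F)\to H_0(K;\F)$ induced by inclusion is then a linear map $\F^2\to\F$, which cannot be injective. This contradicts the defining property of $\F$-tightness applied to $I$, forcing $\{i,j\}\in K$ for every pair of vertices, i.e., $K$ is neighborly.

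There is essentially no obstacle: the entire proof is a one-line application of the definition in the lowest possible homological degree on the smallest possible full subcomplex. The only thing to be careful about is invoking connectedness of $K$, which is built into the definition of $\F$-tightness and is what pins down $H_0(K;\F)=\F$.
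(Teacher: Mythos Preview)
Your proof is correct and follows essentially the same approach as the paper's: both test $\F$-tightness on the full subcomplex spanned by a pair of vertices in homological degree zero, using connectedness of $K$ to force $H_0(K;\F)\cong\F$. The only cosmetic difference is that you phrase it as a contradiction while the paper argues directly.
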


  \begin{proof}
    Let $K$ be an $\F$-tight complex. Then for two vertices $v,w$ of $K$, the natural map $H_0(K_{\{v,w\}};\F)\to H_0(K;\F)$ is injective. Since $K$ is connected, $H_0(K;\F)\cong\F$, and so $H_0(K_{\{v,w\}};\F)\cong\F$. Then $v$ and $w$ must be joined by an edge.
  \end{proof}

  Next, we explain a conjecture on tight manifold triangulations. Let $|K|$ denote the geometric realization of $K$, and let $f(K)=(f_0(K),f_1(K),\ldots,f_{\dim K}(K))$ denotes the $f$-vector of $K$. We say that $K$ is \emph{strongly minimal} if for any simplicial complex $L$ with $|K|\cong|L|$, it holds that
  \[
    f_i(K)\le f_i(L)
  \]
  for each $i\ge 0$. K\"{u}hnel and Lutz \cite{KL} conjectured that every $\F$-tight triangulation of a closed connected manifold is strongly minimal. Clearly, the only $\F$-tight closed connected 1-manifold triangulation is the boundary of a 2-simplex, so the conjecture is true in dimension 1. Moreover, the 2-dimensional case was verified as mentioned in \cite{KL}, and the 3-dimensional case was verified by Bagchi, Datta and Spreer \cite{BDS}. But the case of dimensions
 $\ge 4$ is still open.

  As for minimality of a manifold triangulation, we have another notion introduced by Lutz, Sulanke and Swartz \cite{LSS}.

  \begin{definition}
    A closed connected $d$-manifold triangulation $M$ with vertex set $[m]$ for $d\ge 3$ is \emph{tight-neighborly} if
    \[
      \binom{m-d-1}{2}=\binom{d+2}{2}\beta_1(M;\F).
    \]
  \end{definition}

  Tight-neighborly manifold triangulations are known to be vertex minimal. By definition, tight-neighborliness seems to depend on the ground field $\F$, but it is actually independent of the ground field $\F$ as tight-neighborly manifold triangulations are neighborly and stacked. Tightness and tight-neighborliness have the following relation. Let $S^1\widetilde{\times}S^{d-1}$ denote a non-trivial $S^{d-1}$-bundle over $S^1$.

  \begin{theorem}
    \label{tight vs tight-neighborly}
    Let $M$ be a closed connected $\F$-orientable $d$-manifold triangulation for $d\ge 3$, and consider the following conditions:

    \begin{enumerate}
      \item $M$ is $\F$-tight;

      \item $M$ is tight-neighborly;

      \item $M$ is neighborly and stacked;

      \item $M$ has the topological type of either
      \[
        S^d,\quad(S^1\widetilde{\times}S^{d-1})^{\# k},\quad(S^1\times S^{d-1})^{\# k}.
      \]
    \end{enumerate}
    Then there are implications
    \[
      (1)\quad\Longleftarrow\quad(2)\quad\Longleftrightarrow\quad(3)\quad\Longrightarrow\quad(4).
    \]
    Moreover, the implication $(1)\,\Rightarrow\,(2)$ also holds for $d=3$.
  \end{theorem}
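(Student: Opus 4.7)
The plan is to prove the key equivalence $(2)\Leftrightarrow(3)$ by hand using the $g_2$-invariant and to derive the remaining implications from classification results already in the literature. Set
\[
g_2(M)=f_1(M)-(d+1)f_0(M)+\binom{d+2}{2}.
\]
Since $f_1(M)\le\binom{m}{2}$ with equality iff $M$ is neighborly, a direct expansion gives $g_2(M)\le\binom{m-d-1}{2}$, with equality again characterizing neighborliness. On the other hand, the Novik--Swartz lower bound theorem asserts $g_2(M)\ge\binom{d+2}{2}\beta_1(M;\F)$ for every closed $\F$-orientable $d$-manifold triangulation with $d\ge 3$; by Walkup for $d=3$ and Murai for $d\ge 4$, equality here forces $M$ to be stacked.

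Granting these two bounds, $(2)\Leftrightarrow(3)$ becomes a sandwich argument. If $M$ is neighborly and stacked, both bounds are tight, so the defining identity of tight-neighborliness holds. Conversely, tight-neighborliness squeezes $g_2(M)$ between equal upper and lower bounds, forcing both equalities and hence both neighborliness and stackedness. This is the only essentially new calculation needed in the proof.

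For the remaining implications I would simply invoke the literature. The implication $(2)\Rightarrow(1)$ is the main theorem of Datta--Murai \cite{DM} for $d\ge 4$, while for $d=3$ Bagchi--Datta--Spreer \cite{BDS} proved $(1)\Leftrightarrow(3)$; combined with $(2)\Leftrightarrow(3)$ this yields $(2)\Rightarrow(1)$ in all dimensions and also $(1)\Rightarrow(2)$ in dimension three. The implication $(3)\Rightarrow(4)$ follows from Walkup's classification of neighborly stacked $3$-manifolds as handlebody sums and its higher-dimensional analogue (Bagchi--Datta, Datta--Murai), which show that every neighborly stacked $d$-manifold is built from $\partial\Delta^{d+1}$ by iterated handle additions and therefore realizes one of the listed topological types.

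The main obstacle lies entirely in the cited ingredients: the equality case of the Novik--Swartz bound rests on the rigidity theory of triangulated manifolds, and the BDS characterization requires a delicate homological analysis of vertex links. Since both are standard in this area I would quote them rather than reprove them, leaving the $g_2$ sandwich as the only self-contained step of the proof.
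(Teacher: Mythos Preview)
Your argument is correct, and in fact more detailed than what the paper does: the paper's own proof of this theorem is a one-line citation, deferring all implications to \cite{DM} for $d\ge 4$ and \cite{BDS} for $d=3$. Your $g_2$ sandwich for $(2)\Leftrightarrow(3)$ is precisely the argument underlying those references (going back to Lutz--Sulanke--Swartz and Kalai's rigidity), so you have unpacked what the paper merely cites; the remaining implications you attribute to the same sources the paper uses.
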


  \begin{proof}
    The implications are shown in \cite{DM} for $d\ge 4$ and \cite{BDS} for $d=3$.
  \end{proof}

  \begin{remark}
    The integer $k$ in Theorem \ref{tight vs tight-neighborly} for $d=3$ is known to satisfy $80k+1$ is a perfect square. For $k=1,30,99,208,357,546$, tight-neighborly triangulations of $(S^1\widetilde{\times}S^2)^{\# k}$ are constructed in \cite{BDSS}, but no tight-neighborly triangulation of $(S^1\times S^2)^{\# k}$ is not known.
  \end{remark}


  \section{Weak Golodness}\label{weak Golodness}

  This section introduces weak Golodness and study it for manifold triangulations. Let $K$ be a simplicial complex with vertex set $[m]$, and let $\mathcal{H}_*(\F[K])$ denote the Koszul homology of the Stanley-Reisner ring $\F[K]$. As mentioned in Section \ref{introduction}, $K$ is $\F$-Golod if and only if all products and (higher) Massey products in $\mathcal{H}_*(\F[K])$ vanish. Now we define weak Golodness.

  \begin{definition}
    A simplicial complex $K$ is \emph{weakly $\F$-Golod} if all products in $\mathcal{H}_*(\F[K])$ vanish.
  \end{definition}

  Clearly, $K$ is weakly $\F$-Golod whenever it is $\F$-Golod. Berglund and  J\"{o}llenbeck \cite{BJ} stated that Golodness and weak Golodness of every simplicial complex are equivalent, but this was disproved by Katth\"{a}n \cite{Ka}. Then defining weak Golodness makes sense.

  We recall a combinatorial description of the multiplication of $\mathcal{H}_*(\F[K])$. For disjoint non-empty subsets $I,J\subset[m]$, there is an inclusion
  \[
    \iota_{I,J}\colon K_{I\sqcup J}\to K_I*K_J,\quad\sigma\mapsto(\sigma\cap I,\sigma\cap J).
  \]
  Baskakov, Buchstaber and Panov \cite{BBP} proved:

  \begin{lemma}
    \label{Hochster}
    There is an isomorphism of vector spaces
   \[
     \mathcal{H}_i(\F[K])\cong\bigoplus_{\emptyset\ne I\subset[m]}\widetilde{H}^{i-|I|-1}(K_I;\F)
   \]
   for $i>0$ such that for non-empty subsets $I,J\subset[m]$ the multiplication
   \[
      \widetilde{H}^{i-|I|-1}(K_I;\F)\otimes\widetilde{H}^{i-|J|-1}(K_J;\F)\to\widetilde{H}^{i+j-|I\cup J|-1}(K_{I\cup J};\F)
   \]
   is trivial for $I\cap J\ne\emptyset$ and the induced map of $\iota_{I,J}$ for $I\cap J=\emptyset$.
  \end{lemma}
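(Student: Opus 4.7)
The plan is to derive both the additive isomorphism and the multiplication formula from the topology of the moment-angle complex $Z_K$, exploiting the Buchstaber--Panov isomorphism $H^*(Z_K;\F)\cong\mathcal{H}_*(\F[K])$ of graded algebras, which itself comes from identifying the Koszul complex computing $\Tor^S(\F[K],\F)$ with the cellular cochain complex of $Z_K$ under a suitable cell structure.

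First, I would invoke the standard cell decomposition of $Z_K\subset(D^2)^m$ whose cells are indexed by pairs $(I,\sigma)$ with $I,\sigma$ disjoint subsets of $[m]$ and $I\cup\sigma\in K$. Fixing the ``corner'' coordinate set $I$ and letting $\sigma$ range over simplices in $K_{[m]\setminus I}$ packages cells into a subcomplex whose associated graded cochain complex is identified, up to a degree shift of $|I|+1$, with the reduced cochain complex of the full subcomplex $K_I$. Summing over $I$ and dualising recovers the additive isomorphism of the lemma; this is Hochster's formula encoded topologically.

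Second, I would compute the cup product in $H^*(Z_K;\F)$ using a cellular diagonal approximation on the ambient polydisk $(D^2)^m$. A cocycle representative of a class in the $I$-summand lives, schematically, on the disk coordinates labelled by $I$ and the circle coordinates labelled by the complement. When two such cocycles on the $I$- and $J$-summands are multiplied, at any vertex $v\in I\cap J$ both factors would need to occupy the single $(D^2,S^1)$-pair at $v$, and the product vanishes on the chain level. If instead $I\cap J=\emptyset$, the product lives on the $(I\cup J)$-summand, and a direct inspection of the Alexander--Whitney-style diagonal shows that the resulting cochain on $K_{I\cup J}$ is the pullback along $\iota_{I,J}$ of the external tensor product on $K_I*K_J$, which is precisely the statement of the lemma.

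The main obstacle will be the bookkeeping of bigradings and signs: the degree $i$ on $\mathcal{H}_i(\F[K])$ combines the homological degree of the Koszul complex with an internal degree from the $\Z^m$-grading, and on the topological side this decomposition must match the $|I|$ suspension shifts coming from passing between disk and circle factors at the $I$-coordinates, together with the $-1$ from reduced cohomology. Once these bookkeeping matters are pinned down, the vanishing for $I\cap J\ne\emptyset$ and the join-pullback description for $I\cap J=\emptyset$ fall out of the cellular diagonal computation, so no further geometric input is required.
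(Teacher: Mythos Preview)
The paper does not actually prove this lemma: it is stated immediately after the sentence ``Baskakov, Buchstaber and Panov \cite{BBP} proved:'' and is used as a black box thereafter. Your proposal is a reasonable outline of the argument from that reference---identifying the Koszul complex with cellular cochains on $Z_K$, reading off Hochster's splitting from the cell structure, and computing the cup product via a cellular diagonal---so there is no discrepancy in approach, only the observation that the paper delegates the proof entirely to the literature rather than reproducing it.
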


  Let $M$ be a triangulation of a closed connected $\F$-oriented $d$-manifold with vertex set $[m]$. We consider a relation between the inclusion $\iota_{I,J}$ and the Poincar\'e duality. For any subset $I\subset[m]$, the Poincar\'e duality \cite[Proposition 3.46]{H} holds such that the map
  \[
    H^i(|M_I|;\F)\to H_{d-i}(|M|,|M|-|M_I|;\F),\quad\alpha\mapsto\alpha\frown[M]
  \]
  is an isomorphism, where $[M]$ denotes the fundamental class of $M$. By \cite[Lemma 70.1]{M}, $|M|-|M_I|\simeq|M_J|$ for $J=[m]-I$. Then there is an isomorphism
  \[
    D_{I,J}\colon H^i(M_I;\F)\xrightarrow{\cong}H_{d-i}(M,M_J;\F).
  \]
  Let $\partial\colon H_*(M,M_J;\F)\to H_{*-1}(M_J;\F)$ denote the boundary map of the long exact sequence
  \[
    \cdots\to H_*(M_J;\F)\to H_*(M;\F)\to H_*(M,M_J;\F)\xrightarrow{\partial}H_{*-1}(M_J;\F)\to\cdots.
  \]

  \begin{lemma}
    \label{Poincare}
    Let $M$ be a triangulation of a closed connected $\F$-oriented $d$-manifold with vertex set $[m]$. For any partition $[m]=I\sqcup J$ and $\alpha\in H^i(M_I;\F)$,
    \[
      (\partial\circ D_{I,J})(\alpha)=(-1)^{i+1}(\alpha\otimes 1)((\iota_{I,J})_*([M]))\in H_{d-i-1}(K_J;\F)
    \]
    where we regard $(\iota_{I,J})_*([M])$ as an element of $\bigoplus_{i+j=d-1}H_i(M_I;\F)\otimes H_j(M_J;\F)\cong H_d(M_I*M_J;\F)$.
  \end{lemma}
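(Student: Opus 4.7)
The plan is to verify the identity by a direct chain-level computation. Fix a cycle $\mu\in C_d(M;\F)$ representing $[M]$ and a linear order on $[m]$ in which all vertices of $I$ precede all vertices of $J$; then for a simplex $\sigma$ of $M$, writing $\sigma_I=\sigma\cap I$ and $\sigma_J=\sigma\cap J$, the subset $\sigma_I$ is an initial segment and $\sigma_J$ a final segment of $\sigma$ in this order. This ordering is what makes both sides of the claimed identity transparent at the chain level.

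First I would unwind the right-hand side. Under the standard chain-level isomorphism $C_n(M_I*M_J)\cong\bigoplus_{p+q=n-1}C_p(M_I)\otimes C_q(M_J)$ for the simplicial join, one has $\iota_{I,J}(\sigma)=\sigma_I\otimes\sigma_J$, so $(\iota_{I,J})_*(\mu)=\sum_\sigma\sigma_I\otimes\sigma_J$ summed over $d$-simplices $\sigma$ of $M$, and hence $(\alpha\otimes 1)(\iota_{I,J})_*(\mu)=\sum_{\sigma:\,\dim\sigma_I=i}\alpha(\sigma_I)\,\sigma_J\in C_{d-i-1}(M_J;\F)$. Next I would realize $D_{I,J}(\alpha)$ at the chain level through the dual block decomposition of the closed oriented manifold $M$: the Poincar\'e--Lefschetz chain isomorphism sends a cochain $\tau^*\in C^i(M_I)$ to $\pm D(\tau)$, a $(d-i)$-chain representing $D_{I,J}([\tau^*])\in H_{d-i}(M,M_J;\F)$. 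The boundary of $D(\tau)$ in the relative chain complex is (after the homotopy equivalence $|M|\setminus|M_I|\simeq|M_J|$) a signed sum of dual blocks $D(\tau\cup\{w\})$ over $w\in J$ with $\tau\cup\{w\}\in M$, giving a chain representing a class in $H_{d-i-1}(M_J;\F)$. Expanding each such dual block as a chain of $(d-i-1)$-simplices in $M_J$ and comparing with the expression from the right-hand side identifies the two sides for each basic cochain $\tau^*$; the general case then follows by $\F$-linearity in $\alpha$.

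The main obstacle I foresee is the careful tracking of signs. The sign conventions in the dual-block realization of Poincar\'e--Lefschetz duality, in the Alexander--Whitney-type decomposition of the join chain complex, and in the boundary operator on dual blocks each contribute, and they must combine to produce exactly $(-1)^{i+1}$. The fixed vertex ordering puts $\sigma_I$ and $\sigma_J$ in their natural positions relative to these conventions and reduces the verification to a finite combinatorial check once the sign conventions are pinned down.
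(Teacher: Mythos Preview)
Your treatment of the right-hand side is exactly what the paper does: fix an ordering with $I$ before $J$, write $\mu=\sum_k a_k[v_0^k,\ldots,v_d^k]$ with an initial block in $I$ and a final block in $J$, and read off the $C_i(M_I)\otimes C_{d-i-1}(M_J)$ component of $(\iota_{I,J})_*\mu$ as $\sum_{n_k=i}a_k[v_0^k,\ldots,v_i^k]\otimes[v_{i+1}^k,\ldots,v_d^k]$.

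Where you diverge from the paper is on the left-hand side, and this is where your proposal is both more complicated and, as written, incomplete. You plan to realize $D_{I,J}$ via the dual block decomposition, compute $\partial D(\tau)$ as a signed sum of dual blocks $D(\tau\cup\{w\})$, and then ``expand each such dual block as a chain of $(d-i-1)$-simplices in $M_J$''. But dual blocks are chains in the barycentric subdivision, not in the original simplicial complex; matching them against the simplicial chain $\sum_\sigma\alpha(\sigma_I)\,\sigma_J$ requires passing through the subdivision operator and a chain homotopy inverse, and you would still have to check that this identification agrees with the cap-product definition of $D_{I,J}$ used in the statement. None of this is carried out, and you flag the sign as an open obstacle.

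The paper avoids all of this. It simply extends a cocycle representative $\varphi$ of $\alpha$ to $\overline{\varphi}\in C^i(M;\F)$ by zero outside $M_I$, so that $D_{I,J}(\alpha)$ is represented by $\overline{\varphi}\frown\mu$, and then computes $\partial(\overline{\varphi}\frown\mu)=(\delta\overline{\varphi})\frown\mu$ directly (using $\partial\mu=0$). Because $\overline{\varphi}$ vanishes on any $i$-face not entirely in $I$, the only surviving terms are those with $n_k=i$, and the single nonzero term in $\delta\overline{\varphi}([v_0^k,\ldots,v_{i+1}^k])$ is the one omitting $v_{i+1}^k\in J$, which contributes the sign $(-1)^{i+1}$. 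This is a two-line chain computation with no subdivision, no dual cells, and no sign ambiguity; I would recommend replacing your dual-block plan with this direct cap-product argument.
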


  \begin{proof}
    Let $\varphi\in C^i(M_I;\F)$ be a representative of $\alpha$. We define $\overline{\varphi}\in C^i(M;\F)$ by
    \[
      \overline{\varphi}(\sigma)=
      \begin{cases}
        \varphi(\sigma)&\sigma\in M_I\\
        0&\text{otherwise.}
      \end{cases}
    \]
    Then $\alpha\frown[M]$ is represented by $\overline{\varphi}\frown\mu$ where $\mu$ represents $[M]$. Let $[v_0,\ldots,v_i]$ denote an oriented $i$-simplex with vertices $v_0,\ldots,v_i$. We may set
    \[
      \mu=\sum_ka_k[v_0^k,v_1^k,\ldots,v_d^k]\in C_d(M;\F)
    \]
    for $a_k\in\F$, where $v_0^k,\ldots,v_{n_k}^k\in I$ and $v_{n_k+1}^k,\ldots,v_d^k\in J$ for some $n_k$. Then $(\partial\circ D_{I,J})(\alpha)$ is represented by
    \[
      \partial(\overline{\varphi}\frown\mu)=(\overline{\varphi}\circ\partial)\frown\mu=[(\overline{\varphi}\circ\partial)\frown\mu]=\sum_ka_k\overline{\varphi}(\partial[v_0^k,\ldots,v_{i+1}^k])[v_{i+1}^k,\ldots,v_d^k].
    \]
    Since $(\overline{\varphi}\circ\partial)\vert_{C_{i+1}(M_I;\F)}=\varphi\circ\partial=0$, $\overline{\varphi}(\partial[v_0^k,\ldots,v_{i+1}^k])\ne 0$ only when $n_k=i$. Then $(\partial\circ D_{I,J})(\alpha)$ is represented by
    \[
      \sum_{n_k=i}a_k\overline{\varphi}(\partial[v_0^k,\ldots,v_{i+1}^k])[v_{i+1}^k,\ldots,v_d^k]=(-1)^{i+1}\sum_{n_k=i}a_k\varphi([v_0^k,\ldots,v_i^k,\widehat{v_{i+1}^k}])[v_{i+1}^k,\ldots,v_d^k]
    \]
    On the other hand, since the $C_i(M_I;\F)\otimes C_{d-i-1}(M_J;\F)$ part of $\mu$ is $\sum_{n_k=i}a_k[v_0^k,\ldots,v_d^k]$, $(\iota_{I,J})_*([M])$ is represented by
    \[
      \sum_{n_k=i}a_k[v_0^k,\ldots,v_i^k]\otimes[v_{i+1}^k,\ldots,v_d^k].
    \]
    Thus the proof is complete.
  \end{proof}

  Now we are ready to prove:

  \begin{theorem}
    \label{preGolod}
    If a triangulation of a closed connected $\F$-orientable $d$-manifold is weakly $\F$-Golod, then it is $\F$-tight.
  \end{theorem}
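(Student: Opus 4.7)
The plan is to verify the tightness condition $H_*(M_I;\F)\hookrightarrow H_*(M;\F)$ directly for every nonempty $I\subset[m]$. The case $I=[m]$ is trivial, so I assume the complement $J=[m]\setminus I$ is also nonempty, which places us in the setup of Lemma \ref{Poincare} (with the roles of $I$ and $J$ interchanged relative to the lemma's statement). From the long exact sequence of the pair $(M,M_I)$,
\[
H_{k+1}(M,M_I;\F)\xrightarrow{\partial}H_k(M_I;\F)\to H_k(M;\F),
\]
the desired injectivity is equivalent to $\partial=0$.

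Next I would combine Poincar\'e duality with Lemma \ref{Poincare} to detect $\partial=0$. The isomorphism $D_{J,I}\colon H^*(M_J;\F)\xrightarrow{\cong}H_{d-*}(M,M_I;\F)$ reduces $\partial=0$ to $\partial\circ D_{J,I}=0$, and Lemma \ref{Poincare} rewrites this composite as $\alpha\mapsto\pm(\alpha\otimes 1)((\iota_{J,I})_*([M]))$. Because each $H_*(M_J;\F)$ is finite-dimensional over $\F$, letting $\alpha$ vary detects $(\iota_{J,I})_*([M])$ summand by summand of the join K\"unneth decomposition of $H_d(M_J*M_I;\F)$, so it is enough to show that $(\iota_{J,I})_*([M])=0$.

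Finally, I would extract this vanishing from weak $\F$-Golodness. Lemma \ref{Hochster} identifies the product in $\mathcal{H}_*(\F[M])$ associated to the disjoint pair $(J,I)$, under the join K\"unneth isomorphism, with the map $\iota_{J,I}^*$ on reduced cohomology. Weak Golodness forces this product, hence $\iota_{J,I}^*\colon\widetilde{H}^*(M_J*M_I;\F)\to\widetilde{H}^*(M;\F)$, to vanish; dualizing over the field $\F$ gives $(\iota_{J,I})_*=0$ on $\widetilde{H}_*(M;\F)$, and since $d\ge 1$ the fundamental class $[M]$ lies in the reduced range, so $(\iota_{J,I})_*([M])=0$ as required. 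The most delicate step I foresee is purely bookkeeping: one has to align the degree shift from the suspension form of the join K\"unneth isomorphism with the indexing in Lemma \ref{Hochster} and with the tensor decomposition of $H_d(M_J*M_I;\F)$ appearing in Lemma \ref{Poincare}. Once this is reconciled, the argument is linear algebra on top of the two lemmas.
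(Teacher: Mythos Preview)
Your proposal is correct and follows essentially the same route as the paper: both arguments use Lemma~\ref{Hochster} to convert weak $\F$-Golodness into vanishing of $\iota_{I,J}^*$, dualize over $\F$ (the paper phrases this as the universal coefficient theorem) to get $(\iota_{I,J})_*=0$ and hence $(\iota_{I,J})_*([M])=0$, and then invoke Lemma~\ref{Poincare} together with the isomorphism $D_{I,J}$ to conclude that the connecting homomorphism $\partial$ vanishes, yielding injectivity from the long exact sequence. Your extra remarks about the degree-shift bookkeeping and the intermediate step ``$(\alpha\otimes 1)(-)=0$ for all $\alpha$ implies the element vanishes'' are just making explicit what the paper leaves implicit.
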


  \begin{proof}
    Let $M$ be a triangulation of a closed connected $\F$-oriented $d$-manifold with vertex set $[m]$. Let $[m]=I\sqcup J$ be a partition. Suppose that the map $\iota_{I,J}$ is trivial in cohomology with coefficients in $\F$. Then by the universal coefficient theorem, $\iota_{I,J}$ is trivial in homology with coefficients in $\F$ too. Thus by Lemma \ref{Poincare} the boundary map $\partial\colon H_*(M,M_J;\F)\to H_{*-1}(M_J;\F)$ is trivial, and so the natural map $H_*(M_J;\F)\to H_*(M;\F)$ is injective, completing the proof.
  \end{proof}


  \section{The complex $F(M)$}\label{F(M)}

  Throughout this section, let $M$ be a closed connected tight-neighborly $d$-manifold triangulation for $d\ge 3$ with vertex set $[m]$. Let $K$ be a simplicial complex with vertex set $[m]$. A subset $I\subset[m]$ is a \emph{minimal non-face} of $K$ if every proper subset of $I$ is a simplex of $K$ and $I$ itself is not a simplex of $K$. Define a simplicial complex $F(M)$ by filling all minimal non-faces of cardinality $d+1$ into $M$. This section investigates the complex $F(M)$.

  We set notation. Let $K$ be a simplicial complex with vertex set $[m]$. The link of a vertex $v$ in a simplicial complex $K$ is defined by
  \[
    \lk_K(v)=\{\sigma\in K\mid v\not\in\sigma\text{ and }\sigma\sqcup v\in K\}.
  \]
  For a finite set $S$, let $\Delta(S)$ denote the simplex with vertex set $S$. Then $I\subset[m]$ is a minimal non-face of $K$ if and only if $K_I=\partial\Delta(I)$. Let $K_1,K_2$ be simplicial complexes of dimension $d$ such that $K_1\cap K_2$ is a single $d$-simplex $\sigma$. Then we write
  \[
    K_1\# K_2=K_1\cup K_2-\sigma\quad\text{and}\quad K_1\circ K_2=K_1\cup K_2.
  \]

  \begin{lemma}
    \label{link}
    For each $v\in [m]$, there are $V(v,1),\ldots,V(v,n_v)\subset[m]$ such that $|V(v,k)|=d+1$ for $1\le k\le n_v$ and
    \[
      \lk_M(v)=\partial\Delta(V(v,1))\#\cdots\#\partial\Delta(V(v,n_v)).
    \]
  \end{lemma}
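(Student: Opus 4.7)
The plan is to derive the decomposition of $\lk_M(v)$ from the fact, provided by Theorem \ref{tight vs tight-neighborly}, that $M$ is stacked, and then pass the stacked structure through to vertex links. Concretely, write $M=\partial N$ for a $(d+1)$-manifold triangulation $N$ whose interior simplices all have dimension $\ge d$, and fix a vertex $v\in[m]$. Since $v\in\partial N$, the link $\lk_N(v)$ is a combinatorial $d$-ball with boundary $\partial\lk_N(v)=\lk_{\partial N}(v)=\lk_M(v)$.

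The central step is to show that $\lk_N(v)$ is itself a stacked $d$-ball. A simplex $\tau\in\lk_N(v)$ is interior in $\lk_N(v)$ exactly when $\tau\sqcup\{v\}$ is an interior simplex of $N$, so the stacking hypothesis on $N$ forces every interior simplex of $\lk_N(v)$ to have dimension at least $d-1$. Since $d\ge 3$, a combinatorial $d$-ball with this property is necessarily a stacked $d$-ball, so $\lk_N(v)$ admits a construction as a union $\Delta(V(v,1))\cup\cdots\cup\Delta(V(v,n_v))$ of $d$-simplices with $V(v,k)\subset[m]$ and $|V(v,k)|=d+1$, each $\Delta(V(v,k))$ for $k\ge 2$ attached along a single $(d-1)$-face $\sigma_k$ on the current boundary. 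I would then induct on $n_v$: the base case $n_v=1$ gives $\lk_M(v)=\partial\Delta(V(v,1))$, while in the inductive step, peeling off the last-attached $d$-simplex $\Delta(V(v,n_v))$ replaces $\partial\Delta(V(v,n_v))\setminus\sigma_{n_v}$ with $\sigma_{n_v}$ in the boundary, which is exactly the effect of the operation $\#\,\partial\Delta(V(v,n_v))$ from the paper's definition. This yields the asserted decomposition $\lk_M(v)=\partial\Delta(V(v,1))\#\cdots\#\partial\Delta(V(v,n_v))$.

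The main obstacle is the middle assertion: that a combinatorial $d$-ball whose interior simplices all have dimension $\ge d-1$ must be a stacked $d$-ball for $d\ge 3$. This is a standard but nontrivial result in the theory of stacked triangulations (implicit in the work of Bagchi and Datta cited in the paper), proved by iteratively detaching an \emph{ear} $d$-simplex glued to the rest along a unique $(d-1)$-face, the existence of such ears at each stage being guaranteed by the dimension condition on interior simplices. One must also verify that the intersection of $\partial\Delta(V(v,n_v))$ with the boundary of the previously-built stacked sub-ball is exactly the single $(d-1)$-simplex $\sigma_{n_v}$, so that the paper's definition of $\#$ applies; picking the peeled-off simplex to be an ear of the dual tree is what rules out any spurious extra overlap.
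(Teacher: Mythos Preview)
Your argument is correct, but it takes a somewhat different path from the paper's. The paper's proof is almost entirely a citation: for $d=3$ it points to \cite{BDS}, and for $d\ge 4$ it invokes \cite{DM} to get that tight-neighborliness implies \emph{local} stackedness (each vertex link is a stacked sphere), then cites the Bagchi--Datta characterization \cite{BD} of stacked $(d-1)$-spheres as $\partial\Delta^d\#\cdots\#\partial\Delta^d$. You instead pull the \emph{global} stackedness $M=\partial N$ from Theorem~\ref{tight vs tight-neighborly}, pass it to the link $\lk_N(v)$ to obtain a $d$-ball with interior faces of dimension $\ge d-1$, and then unpack the $\#$-decomposition of $\partial\lk_N(v)=\lk_M(v)$ by hand via the ear-peeling induction. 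The upshot is that your argument is uniform in $d\ge 3$ and more self-contained, at the cost of invoking (or reproving) the stacked-ball characterization rather than simply citing it.

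One point worth tightening: your final worry about ``spurious extra overlap'' is genuine but resolves cleanly once you use the hypothesis. If $\Delta(V(v,n_v))$ is a leaf of the dual tree with attaching facet $\sigma$ and opposite vertex $w$, and if $w$ already lay in $B_{n_v-1}$, then connectivity of the $(d-1)$-ball $\lk_B(w)$ would force some $(d-1)$-face of $\Delta(V(v,n_v))$ other than $\sigma$ to be interior in $B$, contradicting the leaf property. So $w\notin B_{n_v-1}$, hence $\partial\Delta(V(v,n_v))\cap\partial B_{n_v-1}$ is exactly the closed simplex $\sigma$, and the paper's $\#$ applies. With this in place your induction goes through without further obstruction.
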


  \begin{proof}
    The case $d=3$ is proved \cite[Proof of Theorem 1.2]{BDS}. For $d\ge 4$, tight-neighborliness implies local stackedness, that is, every vertex link is a stacked sphere, as in \cite{DM}. Moreover, stacked spheres are characterized by Bagchi and Datta \cite{BD} such that every stacked $(d-1)$-sphere is of the form $\partial\Delta^d\#\cdots\#\partial\Delta^d$. Then we obtain the result for $d\ge 4$.
  \end{proof}

  Generalizing neighborliness, we say that a simplicial complex is \emph{$k$-neighborly} if every $k+1$ vertices form a simplex. So 1-neighborliness is precisely neighborliness.

  \begin{lemma}
    \label{link neighborly}
    For each $v\in [m]$ and $1\le k\le n_v$, $M_{V(v,k)\sqcup v}$ is $(d-1)$-neighborly.
  \end{lemma}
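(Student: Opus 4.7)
The plan is to exploit the stackedness of $M$ via its filling manifold. By Theorem \ref{tight vs tight-neighborly}, tight-neighborliness of $M$ forces $M$ to be stacked, so there is a $(d+1)$-manifold triangulation $N$ with $\partial N = M$ all of whose interior simplices have dimension $\ge d$. For the fixed vertex $v$, the link $\lk_N(v)$ is then a stacked $d$-ball with boundary $\lk_M(v)$, and its top-dimensional $d$-simplices are in bijective correspondence with the $(d+1)$-simplices of $N$ containing $v$.

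The first key step is to identify the sets $V(v,k)$ furnished by Lemma \ref{link} with the vertex sets of the top $d$-simplices of $\lk_N(v)$. The connected-sum decomposition $\lk_M(v) = \partial\Delta(V(v,1)) \# \cdots \# \partial\Delta(V(v,n_v))$ is precisely the one obtained by taking the boundaries of the top $d$-simplices of any stacked $d$-ball that the sphere $\lk_M(v)$ bounds, so each set $V(v,k) \sqcup v$ will be the vertex set of a $(d+1)$-simplex $\sigma_k$ of $N$.

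With this identification in hand the argument finishes quickly. Any $d$-subset $S$ of $V(v,k) \sqcup v$ is the vertex set of a codimension-one face of $\sigma_k$, hence is a $(d-1)$-simplex of $N$. Since $\dim S = d - 1 < d$, the stacking hypothesis on $N$ forces this simplex to lie on the boundary $\partial N = M$. Therefore every $d$-subset of $V(v,k) \sqcup v$ is a simplex of $M$, which is exactly $(d-1)$-neighborliness of $M_{V(v,k) \sqcup v}$.

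The main obstacle is the identification step: matching the $V(v,k)$'s produced by the abstract decomposition of $\lk_M(v)$ in Lemma \ref{link} with the vertex sets of the top simplices of the concrete filling ball $\lk_N(v)$. This reduces to knowing that the connected-sum decomposition of a stacked sphere, as used in the Bagchi--Datta characterization cited in the proof of Lemma \ref{link}, agrees with the top-simplex data of any stacked ball that it bounds. Once this is pinned down, everything else is a routine consequence of the definition of stackedness.
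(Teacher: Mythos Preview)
Your approach is genuinely different from the paper's and, modulo the identification step you flag, it is correct. The paper never touches the global stacked filling $N$; instead it argues as follows. From Lemma~\ref{link} one sees that $\lk_M(v)_{V(v,k)}$ is $(d-2)$-neighborly, which immediately handles every $d$-subset of $V(v,k)\sqcup v$ that contains $v$. For a $d$-subset $J\subset V(v,k)$, the paper observes that if $J$ were \emph{not} a simplex of $M$ then $M_J=\partial\Delta(J)$ while $M_{J\sqcup v}=\partial\Delta(J)*v$ is a cone, so $H_{d-2}(M_J;\F)\to H_{d-2}(M_{J\sqcup v};\F)$ fails to be injective, contradicting $\F$-tightness (which holds by Theorem~\ref{tight vs tight-neighborly}). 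Thus the paper uses the \emph{tightness} consequence of tight-neighborliness, whereas you use the \emph{stackedness} consequence and pass through the filling $(d+1)$-manifold.

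What each approach buys: the paper's argument is entirely intrinsic to $M$ and needs nothing beyond Lemma~\ref{link} and tightness, so it is self-contained within the paper's framework. Your argument is more structural and explains geometrically \emph{why} those $d$-subsets are simplices (they are codimension-two faces of $N$, hence boundary faces), but it imports the global filling and hinges on matching the $V(v,k)$ produced by Lemma~\ref{link} with the facets of $\lk_N(v)$. That matching is indeed the crux: you need that a stacked $(d-1)$-sphere with $d\ge 3$ bounds a \emph{unique} stacked $d$-ball (equivalently, that the connected-sum decomposition into $\partial\Delta^d$'s is unique). This is true and is in the Bagchi--Datta circle of results you gesture at, but your proposal does not actually pin it down; as written it remains an appeal to an unproved (though standard) fact. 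If you want the argument to stand on its own, either cite the uniqueness explicitly, or---more simply---note that the proof of Lemma~\ref{link} may as well \emph{define} the $V(v,k)$ via the facets of $\lk_N(v)$, which makes the identification tautological.
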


  \begin{proof}
    By Lemma \ref{link}, $\lk_M(v)_{V(v,k)}$ is $\partial\Delta^d$ removed some $(d-1)$-simplices, implying it is $(d-2)$-neighborly. So if $I$ is a subset of $V(v,k)$ with $|I|=d-1$, then $I\sqcup v$ is a simplex of $M$. It remains to show $M_{V(v,k)}$ is $(d-1)$-neighborly. Let $J$ be any subset of $V(v,k)$ with $|J|=d$. Then $\partial\Delta(J)$ is a subcomplex of $M$. If $M_J=\partial\Delta(J)$, then $M_{J\sqcup v}=\partial\Delta(J)*v$, which is contractible. So the inclusion $M_J\to M_{J\sqcup v}$ is not injective in homology with coefficients in $\F$. By Theorem \ref{tight vs tight-neighborly}, $M$ is $\F$-tight, so we get a contradiction. Thus $J$ must be a simplex of $M$, completing the proof.
  \end{proof}

  We prove local properties of the complex $F(M)$.

  \begin{proposition}
    \label{local F}
    \begin{enumerate}
      \item For each $v\in[m]$,
      \[
        \lk_{F(M)}(v)=\partial\Delta(V(v,1))\circ\cdots\circ\partial\Delta(V(v,n_v)).
      \]

      \item For each $v\in[m]$ and $1\le k\le n_v$, $V(v,k)\sqcup v$ is a minimal non-face of $F(M)$.
    \end{enumerate}
  \end{proposition}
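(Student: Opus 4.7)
I plan to derive both parts from Lemma \ref{link} (the connected-sum decomposition of $\lk_M(v)$) and Lemma \ref{link neighborly} (the $(d-1)$-neighborliness of $M_{V(v,k)\sqcup v}$), which together constrain the $(d-1)$-simplices of $\lk_M(v)$ and of $M_{V(v,k)\sqcup v}$ very tightly.

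For the inclusion $\supseteq$ in (1), observe that the $\circ$-product contains the $\#$-product $\lk_M(v)$ and differs from it by the $n_v-1$ $(d-1)$-simplices $\tau$ removed during the connected sums. For each such $\tau\subset V(v,k)$, I would show that $\tau\sqcup v$ is a minimal non-face of $M$ of cardinality $d+1$: the relation $\tau\notin\lk_M(v)$ yields $\tau\sqcup v\notin M$, while every proper subset lies in $M$, using $(d-1)$-neighborliness of $M_{V(v,k)\sqcup v}$ for subsets avoiding $v$, and using that the $\#$ operation leaves lower-dimensional faces of $\partial\Delta(V(v,k))$ intact for subsets containing $v$. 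Thus each such $\tau\sqcup v$ is filled in when forming $F(M)$, so $\tau\in\lk_{F(M)}(v)$, and the $\supseteq$ inclusion follows.

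The reverse inclusion $\subseteq$ in (1) is the main obstacle. For $\sigma\in\lk_{F(M)}(v)\setminus\lk_M(v)$, we have $|\sigma|=d$, $\sigma\sqcup v$ is a minimal non-face of $M$, and $\partial\sigma\subset\lk_M(v)$; the task is to show $\sigma\subset V(v,k)$ for some $k$. I would proceed by induction on $n_v$. Writing $\lk_M(v)=K'\#\partial\Delta(V(v,n_v))$ with $K'$ the connected sum of the first $n_v-1$ summands, the vertex set of $\sigma$ splits into parts in $V(K')\setminus V(v,n_v)$, in $V(v,n_v)\setminus V(K')$, and in the overlap $V(K')\cap V(v,n_v)$. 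If both of the first two strata are nonempty, then I can choose a $(d-2)$-subface of $\sigma$ that still meets both (using $d\ge 3$ to guarantee enough room), and such a face cannot lie in either $K'$ or $\partial\Delta(V(v,n_v))$, contradicting $\partial\sigma\subset\lk_M(v)$. Otherwise $\sigma$ lies entirely on one side of the connected sum, and induction applies.

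For part (2), $V(v,k)\sqcup v$ has cardinality $d+2$, so it cannot be a simplex of $F(M)$, since $F(M)$ is obtained from the $d$-dimensional complex $M$ by filling in minimal non-faces of cardinality only $d+1$. For proper subsets, those of cardinality at most $d$ are simplices of $M$ by $(d-1)$-neighborliness of $M_{V(v,k)\sqcup v}$ together with the preservation of low-dimensional faces in the connected-sum decomposition of $\lk_M(v)$. The cardinality-$(d+1)$ subsets are $V(v,k)$ itself and the sets $(V(v,k)\setminus\{w\})\sqcup v$ for $w\in V(v,k)$; in each case all proper subfaces are simplices of $M$ by the preceding argument, so each such subset is either already a simplex of $M$ or a minimal non-face of $M$ of cardinality $d+1$, and hence in $F(M)$.
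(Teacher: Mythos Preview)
Your proof is correct and follows essentially the same route as the paper's. The paper argues the $\supseteq$ inclusion in (1) in the same way (showing the $(d-1)$-simplices deleted by the connected sums are restored in $\lk_{F(M)}(v)$ via Lemma~\ref{link neighborly}), and for (2) it likewise reduces everything to the $(d-1)$-neighborliness of $M_{V(v,k)\sqcup v}$.

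The one substantive difference is the $\subseteq$ inclusion in (1). The paper simply declares it ``obvious by the construction of $F(M)$,'' whereas you supply an actual argument: given $\sigma\in\lk_{F(M)}(v)\setminus\lk_M(v)$, you use $\partial\sigma\subset\lk_M(v)$ together with an induction peeling off the last summand of the stacked sphere to force $\sigma\subset V(v,k)$ for some $k$. Your case analysis is sound; note in particular that $|V(v,n_v)\setminus V(K')|=1$ (since $K'\cap\partial\Delta(V(v,n_v))$ being a single $(d-1)$-simplex forces $V(K')\cap V(v,n_v)$ to be exactly its $d$ vertices), so the stratum $B$ has at most one element, and when $|A|=|B|=1$ the hypothesis $d\ge 3$ gives $|C|\ge 1$, allowing you to delete a vertex from $C$. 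This is a genuine (if short) argument that the paper elides, so your write-up is more complete here. For (2) you show $V(v,k)\sqcup v\notin F(M)$ by the dimension bound $\dim F(M)=d$, while the paper deduces it from part (1); both are immediate.
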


  \begin{proof}
    (1) Let $\sigma$ be the $(d-1)$-simplex $(\partial\Delta(V(v,1))\#\cdots\#\partial\Delta(V(v,k)))\cap\partial\Delta(V(v,k+1))$. Then by Lemma \ref{link neighborly}, $\partial\Delta(\sigma\sqcup v)$ is a subcomplex of $M$, implying $\sigma\sqcup v$ is a simplex of $F(M)$. Then by induction, we get $\partial\Delta(V(v,1))\circ\cdots\circ\partial\Delta(V(v,n_v))\subset\lk_{F(M)}(v)$. The reverse inclusion is obvious by the construction of $F(M)$, completing the proof.

    (2) By Lemma \ref{link neighborly}, $V(v,k)$ is a simplex of $F(M)$, so every proper subset $I$ of $V(v,k)\sqcup v$ is a simplex of $F(M)$. By (1), $V(v,k)\sqcup v$ is not a simplex of $F(M)$. Then the statement is proved.
  \end{proof}

  We compute the homology of $F(M)$. Let
  \[
    S(M)=\{V(v,k)\sqcup v\mid v\in[m]\text{ and }1\le k\le n_v\}.
  \]
  Then $S(M)$ is the set of all subsets $I\subset[m]$ such that $|I|=d+2$ and $\lk_{M_I}(v)$ is $(d-2)$-neighborly for some $v\in I$.

  \begin{lemma}
    \label{S}
    $F(M)=\bigcup_{I\in S(M)}\partial\Delta(I)$.
  \end{lemma}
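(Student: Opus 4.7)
The plan is to verify the equality by two inclusions, both of which reduce quickly to Proposition \ref{local F}.

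For the inclusion $\bigcup_{I \in S(M)} \partial\Delta(I) \subset F(M)$, I would simply quote Proposition \ref{local F}(2): it asserts that each $V(v,k)\sqcup v \in S(M)$ is a minimal non-face of $F(M)$, so by definition every proper subset of $V(v,k)\sqcup v$ is a simplex of $F(M)$, which gives $\partial\Delta(V(v,k)\sqcup v) \subset F(M)$.

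For the reverse inclusion $F(M) \subset \bigcup_{I \in S(M)}\partial\Delta(I)$, I would first observe that $F(M)$ is pure of dimension $d$. Indeed, $M$ is pure of dimension $d$ since it is a closed $d$-manifold triangulation, and the simplices adjoined to $M$ to form $F(M)$ are exactly the minimal non-faces of $M$ of cardinality $d+1$, which are themselves $d$-simplices; hence every simplex of $F(M)$ is a face of some $d$-simplex of $F(M)$. It therefore suffices to show that every $d$-simplex $\tau$ of $F(M)$ is properly contained in some element of $S(M)$. Pick any vertex $v\in\tau$; then $\tau\setminus v$ is a $(d-1)$-simplex of $\lk_{F(M)}(v)$. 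By Proposition \ref{local F}(1) this link equals $\partial\Delta(V(v,1))\circ\cdots\circ\partial\Delta(V(v,n_v))$, so $\tau\setminus v$ must be a facet of some $\partial\Delta(V(v,k))$, i.e.\ $\tau\setminus v\subset V(v,k)$. Consequently $\tau\subset V(v,k)\sqcup v$, and since $|\tau|=d+1<d+2=|V(v,k)\sqcup v|$ the inclusion is strict, so $\tau\in\partial\Delta(V(v,k)\sqcup v)$. An arbitrary simplex of $F(M)$ is a face of such a $\tau$ and so lies in the same $\partial\Delta(V(v,k)\sqcup v)$.

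I do not anticipate any serious obstacle: the lemma is an immediate consequence of the two parts of Proposition \ref{local F} together with the purity of $F(M)$, and the substantive content has already been absorbed into that proposition.
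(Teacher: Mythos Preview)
Your proof is correct and follows essentially the same route as the paper's: both inclusions come from Proposition \ref{local F}, and the reverse inclusion proceeds by picking a vertex $v$ of a simplex and locating the remainder inside some $V(v,k)$. Your purity observation is a mild streamlining of the paper's argument, which instead treats the $(d-1)$-skeleton and the $d$-simplices separately (using $\lk_M(v)$ directly for the former rather than $\lk_{F(M)}(v)$).
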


  \begin{proof}
    Let $K=\bigcup_{I\in S(M)}\partial\Delta(I)$. By Proposition \ref{local F}, $K\subset F(M)$. For any $k$-simplex $\sigma$ of $F(M)$ with $0\le k\le d-1$ and $v\in\sigma$, $\sigma-v$ is a simplex of $\lk_M(v)$ because $\sigma$ is a simplex of $M$ too. Then $\sigma-v\subset V(v,l)$ for some $1\le l\le n_v$, implying $\sigma$ is a simplex of $K$. Thus the $(d-1)$-skeleton of $F(M)$ is included in $K$. Take any $d$-simplex $\sigma$ of $F(M)$. Then $\sigma$ is either a simplex or a minimal non-face of $M$. In both cases, $\partial\Delta(\sigma-v)$ is a subcomplex of $\lk_M(v)$ for $v\in\sigma$. Then $\sigma-v\subset V(v,l)$ for some $1\le l\le n_v$, implying $\sigma$ is a simplex of $K$. Thus $F(M)\subset K$, completing the proof.
  \end{proof}

  We compute the homology of $F(M)$. By Lemma \ref{S}, there is an inclusion $g_I\colon\partial\Delta(I)\to F(M)$ for each $I\in S(M)$. Let $u_I\in H_d(F(M);\Z)$ be the Hurewicz image of $g_I$.

  \begin{proposition}
    \label{homology}
    The integral homology of $F(M)$, except for dimension 1, is given by
    \[
      \widetilde{H}_*(F(M);\Z)=
      \begin{cases}
        \Z\langle u_I\mid I\in S(M)\rangle&*=d\\
        0&*\ne 1,d.
      \end{cases}
    \]
  \end{proposition}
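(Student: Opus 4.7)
The plan is to compute $\widetilde H_*(F(M);\Z)$ via the long exact sequence of the pair $(F(M),M)$, combined with the topological structure afforded by Theorem~\ref{tight vs tight-neighborly}. Writing $\mathcal{N}$ for the set of minimal non-faces of $M$ of cardinality $d+1$, the complex $F(M)$ is obtained from $M$ by attaching a single $d$-cell along its boundary for each $\tau\in\mathcal{N}$, so $F(M)/M\simeq\bigvee_{\tau\in\mathcal{N}}S^d$ and $H_k(F(M),M;\Z)=\Z^{|\mathcal{N}|}$ when $k=d$ and vanishes otherwise. Since each topological type in Theorem~\ref{tight vs tight-neighborly}(4) satisfies $H_i(-;\Z)=0$ for $2\le i\le d-2$, the long exact sequence immediately yields $\widetilde H_k(F(M);\Z)=0$ in this range, and the remaining portion reduces to
\[
  0\to H_d(M;\Z)\to H_d(F(M);\Z)\to\Z^{|\mathcal{N}|}\xrightarrow{\partial}H_{d-1}(M;\Z)\to H_{d-1}(F(M);\Z)\to 0,
\]
with $\partial(e_\tau)=[\partial\Delta(\tau)]$.

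The crucial step is to prove surjectivity of $\partial$. For this I would invoke Theorem~\ref{tight vs tight-neighborly}(3) and write $M=\partial P$ for a stacked $(d+1)$-manifold $P$. A stacked $(d+1)$-manifold is homotopy equivalent to a wedge of circles, so $H_i(P;\Z)=0$ for $i\ge 2$; since $d\ge 3$, both $H_d(P;\Z)$ and $H_{d-1}(P;\Z)$ vanish, and the pair sequence of $(P,M)$ produces an isomorphism $H_d(P,M;\Z)\xrightarrow{\cong}H_{d-1}(M;\Z)$. Stackedness guarantees that every interior $d$-simplex $\sigma$ of $P$ is a minimal non-face of $M$ of cardinality $d+1$, so the set $\mathcal{I}$ of interior $d$-simplices satisfies $\mathcal{I}\subseteq\mathcal{N}$; since $P$ has no interior $(d-1)$-simplices, $C_d(P,M;\Z)=\Z^{|\mathcal{I}|}$ consists entirely of relative cycles, and the composite $\Z^{|\mathcal{I}|}\twoheadrightarrow H_d(P,M;\Z)\xrightarrow{\cong}H_{d-1}(M;\Z)$ sends $\sigma$ to $[\partial\Delta(\sigma)]$, which is precisely the restriction of $\partial$ to $\Z^{|\mathcal{I}|}\subseteq\Z^{|\mathcal{N}|}$. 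Hence $\partial$ is surjective, which forces $H_{d-1}(F(M);\Z)=0$ and makes $H_d(F(M);\Z)$ a free abelian group of rank $1+|\mathcal{N}|-\beta_1(M)$.

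To identify the basis, note that by Lemma~\ref{S} each $u_I$ for $I\in S(M)$ is the fundamental class of the $d$-sphere $\partial\Delta(I)\subset F(M)$, hence a $d$-cycle, so $\{u_I\mid I\in S(M)\}$ is the natural candidate basis. Linear independence would be verified at the chain level by examining the coefficient of a distinguished $d$-face in each $\partial\Delta(I)$, while the rank identity $|S(M)|=1+|\mathcal{N}|-\beta_1(M)$ should fall out of Lemma~\ref{link} together with the defining equation $\binom{m-d-1}{2}=\binom{d+2}{2}\beta_1(M)$ of tight-neighborliness. The main obstacle is this concluding chain-level and face-number bookkeeping: the surjectivity of $\partial$ drops out cleanly from the bounding stacked manifold, but matching $\{u_I\}_{I\in S(M)}$ to an actual $\Z$-basis, as opposed to a mere spanning set, requires careful control over how $d$-simplices are shared among the various spheres $\partial\Delta(I)$.
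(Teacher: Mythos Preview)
Your approach via the pair $(F(M),M)$ and the bounding stacked $(d+1)$-manifold $P$ is genuinely different from the paper's, and the surjectivity argument for $\partial$ using the interior $d$-simplices of $P$ is both correct and attractive: it yields $H_{d-1}(F(M);\Z)=0$ and the freeness of $H_d(F(M);\Z)$ in one stroke. (The claim that $|P|$ is homotopy equivalent to a wedge of circles deserves a sentence---it follows, for instance, by collapsing $P$ onto its dual graph, or via Lefschetz duality together with $H_j(P,M;\Z)=0$ for $j\ne d,d+1$.)

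However, your proof stops precisely where the statement's main content begins: the identification of $\{u_I\mid I\in S(M)\}$ as a $\Z$-basis. You correctly flag this as the obstacle, and neither the rank identity $|S(M)|=\operatorname{rank}H_d(F(M);\Z)$ nor the linear independence of the $u_I$ is actually established. Both are genuinely non-trivial: $S(M)$ and $\mathcal{N}$ are defined very differently, distinct pairs $(v,k)$ can yield the same $V(v,k)\sqcup v$, and a given $d$-simplex of $F(M)$ can lie in many of the spheres $\partial\Delta(I)$, so there is no evident ``distinguished $d$-face'' to read off coefficients. The paper sidesteps this bookkeeping entirely by a different inductive scheme: instead of comparing $F(M)$ to $M$, it computes $H_*(F(M)_I;\Z)$ by induction on $|I|$, using the decomposition $F(M)_I=F(M)_{I-v}\cup(\lk_{F(M)_I}(v)*v)$. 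Proposition~\ref{local F} gives complete control of the link: $H_{d-1}(\lk_{F(M)_I}(v);\Z)$ is free on the classes $\bar u_{V(v,k)}$ with $V(v,k)\sqcup v\subset I$, and these lift canonically to $u_{V(v,k)\sqcup v}\in H_d(F(M)_I;\Z)$. The basis $\{u_J\mid J\in S(M),\,J\subset I\}$ is thus produced directly by the induction, with no separate rank count or independence verification required. Your global approach trades this local structure for a clean surjectivity argument, but must then recover the combinatorics it discarded; the paper's route keeps that combinatorics in hand throughout and is what makes Corollary~\ref{g hat} immediate.
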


  \begin{proof}
    Since $F(M)$ is obtained from $M$ by attaching $d$-simplices, we only need to calculate $H_{d-1}$ and $H_d$ by Theorem \ref{tight vs tight-neighborly}. By Lemma \ref{link}, each component of $\lk_{M_I}(v)$ is $(d-2)$-connected, where $\lk_{M_I}(v)=\lk_M(v)_{I-v}$. Then there is an exact sequence
    \begin{multline*}
      0\to\widetilde{H}_d(F(M)_{I-v};\Z)\to H_d(F(M)_I;\Z)\xrightarrow{\partial}H_{d-1}(\lk_{F(M)_I}(v);\Z)\\
      \to H_{d-1}(F(M)_{I-v};\Z)\to H_{d-1}(F(M)_I;\Z)\to 0.
    \end{multline*}
    By Proposition \ref{local F}, there is an inclusion $\partial\Delta(V(v,k))\to\lk_{F(M)_I}(v)$ for $V(v,k)\sqcup v\subset I$, and we write the Hurewicz image of this inclusion by $\bar{u}_{V(v,k)}$. By Proposition \ref{local F},
    \[
      H_{d-1}(\lk_{F(M)_I}(v);\Z)=\Z\langle \bar{u}_{V(v,k)}\mid V(v,k)\sqcup v\subset I\rangle
    \]
    such that $\partial(u_{V(v,k)\sqcup v})=\bar{u}_{V(v,k)}$. Then $\partial$ is surjective, so we get an isomorphism
    \[
      H_{d-1}(F(M)_{I-v};\Z)\cong H_{d-1}(F(M)_I;\Z).
    \]
    Thus we obtain $H_{d-1}(F(M)_I;\Z)=0$ for any $I\subset[m]$ by induction on $|I|$, where $H_{d-1}(F(M)_I;\Z)=0$ for $|I|=1$. We also get a split exact sequence
    \[
      0\to H_d(F(M)_{I-v};\Z)\to H_d(F(M)_I;\Z)\xrightarrow{\partial}H_{d-1}(\lk_{F(M)_I}(v);\Z)\to 0.
    \]
    Then by induction on $|I|$, we also obtain
    \[
      H_d(F(M)_I;\Z)=\Z\langle u_{V(v,k)}\mid V(v,k)\sqcup v\subset I\rangle.
    \]
    Thus the proof is complete.
  \end{proof}

  By Theorem \ref{tight vs tight-neighborly}, $\pi_1(|M|)$ is a free group. Since $|F(M)|$ is obtained by attaching $d$-cells to $|M|$, the inclusion $|M|\to|F(M)|$ is an isomorphism in $\pi_1$, so $\pi_1(|F(M)|)$ is a free group too. Then there is a map $f\colon B\to|F(M)|$ which is an isomorphism in $\pi_1$, where $B$ is a wedge of circles. Let $\widehat{F}(M)$ be the cofiber of $f$. Since there is an exact sequence
  \[
    \cdots\to H_*(B;\Z)\xrightarrow{f_*}H_*(F(M);\Z)\to \widetilde{H}_*(\widehat{F}(M);\Z)\to\cdots
  \]
  the natural map $H_*(F(M);\Z)\to H_*(\widehat{F}(M);\Z)$ is an isomorphism for $*\ne 1$. Let $\hat{g}_I$ be the composite $|\partial\Delta(I)|\xrightarrow{g_I}|F(M)|\to\widehat{F}(M)$ for $I\in S(M)$, and let $\hat{u}_I$ be the Hurewicz image of $\hat{g}_I$. By Proposition \ref{homology}, we get:

  \begin{corollary}
    \label{homology hat}
    The integral homology of $\widehat{F}(M)$ is given by
    \[
      \widetilde{H}_*(\widehat{F}(M);\Z)=
      \begin{cases}
        \Z\langle\hat{u}_I\mid I\in S(M)\rangle&*=d\\
        0&*\ne d.
      \end{cases}
    \]
  \end{corollary}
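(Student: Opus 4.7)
My plan is to extract the corollary directly from the long exact homology sequence associated with the cofiber sequence $B \xrightarrow{f} |F(M)| \to \widehat{F}(M)$, fed by Proposition \ref{homology}. Since $B$ is a wedge of circles, $\widetilde{H}_n(B;\Z)=0$ for every $n\geq 2$, so the sequence immediately yields that $\widetilde{H}_n(F(M);\Z)\to\widetilde{H}_n(\widehat{F}(M);\Z)$ is an isomorphism for every $n\geq 3$. By Proposition \ref{homology}, in this range the only nonzero group is in degree $d$ (using $d\geq 3$), where it equals $\Z\langle u_I\mid I\in S(M)\rangle$. Under this isomorphism $u_I$ is carried to $\hat{u}_I$ because, by construction, $\hat{g}_I$ is $g_I$ post-composed with the quotient $|F(M)|\to\widehat{F}(M)$, so the Hurewicz images correspond.

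The only remaining point is to control degrees $1$ and $2$. The key observation is that $f_*\colon H_1(B;\Z)\to H_1(F(M);\Z)$ is an isomorphism: by construction $f$ is a $\pi_1$-isomorphism, and the discussion immediately preceding the corollary notes that $\pi_1(|F(M)|)$ is free (since $\pi_1(|M|)$ is free by Theorem \ref{tight vs tight-neighborly} and $|F(M)|$ is built from $|M|$ by attaching $d$-cells with $d\geq 3$); a $\pi_1$-isomorphism between spaces with free fundamental group becomes an $H_1$-isomorphism after abelianization. Plugging this iso into the relevant segment of the long exact sequence forces $\widetilde{H}_1(\widehat{F}(M);\Z)=0$ from surjectivity on $H_1$, and combined with $\widetilde{H}_2(F(M);\Z)=0$ from Proposition \ref{homology}, injectivity on $H_1$ also yields $\widetilde{H}_2(\widehat{F}(M);\Z)=0$.

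Putting the two cases together gives the displayed formula. The only nontrivial input is Proposition \ref{homology}; the only subtlety is the free-$\pi_1$ argument ensuring $f_*$ is a homology isomorphism in degree one, which is already implicit in the paragraph preceding the corollary. I do not anticipate any serious obstacle in carrying out the plan, as the corollary is essentially a bookkeeping consequence of the fact that $\widehat{F}(M)$ was constructed precisely to kill $H_1$ of $F(M)$ while leaving the higher homology untouched.
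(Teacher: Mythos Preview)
Your argument is correct and matches the paper's approach exactly: the paper derives the corollary from the long exact sequence of the cofibration $B\to|F(M)|\to\widehat{F}(M)$ together with Proposition \ref{homology}, using that $f$ is a $\pi_1$-isomorphism (hence an $H_1$-isomorphism) to handle the low degrees. You are in fact more explicit than the paper about the degree-$2$ case, but the underlying reasoning is the same.
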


  Since $\widehat{F}(M)$ is path-connected, there is a map
  \[
    g\colon \bigvee_{I\in S(M)}|\partial\Delta(I)|\to\widehat{F}(M)
  \]
  such that $g\vert_{|\partial\Delta(I)|}\simeq\hat{g}_I$ for each $I\in S(M)$. Then by Corollary \ref{homology hat} and the J.H.C. Whitehead theorem, we obtain the following.

  \begin{corollary}
    \label{g hat}
    The map $g\colon \bigvee_{I\in S(M)}|\partial\Delta(I)|\to\widehat{F}(M)$ is a homotopy equivalence.
  \end{corollary}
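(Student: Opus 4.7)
The plan is to apply the homological form of J.H.C. Whitehead's theorem. For this I need both spaces to be simply connected CW complexes and the map $g$ to induce an isomorphism on integral homology; the content of Corollary \ref{homology hat} will then essentially hand me the result.

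First I would identify the homotopy type of the source. Each $I\in S(M)$ has cardinality $d+2$, so $\partial\Delta(I)$ is the boundary of a $(d+1)$-simplex and $|\partial\Delta(I)|\cong S^d$. Hence $\bigvee_{I\in S(M)}|\partial\Delta(I)|$ is a wedge of $d$-spheres. Since $d\ge 3$, it is simply connected, and its reduced integral homology is free abelian, concentrated in degree $d$, with basis given by the fundamental classes $[|\partial\Delta(I)|]$ for $I\in S(M)$.

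Next I would check that $\widehat{F}(M)$ is simply connected. The map $f\colon B\to|F(M)|$ was chosen to induce an isomorphism on $\pi_1$, and $B$ is a wedge of circles; so passing to the cofiber kills $\pi_1(|F(M)|)$, and $\pi_1(\widehat{F}(M))=0$ (using van Kampen, or because $\widehat{F}(M)$ is obtained from $|F(M)|$ by attaching cones on a set of generators of its free fundamental group, followed by attaching $2$-cells from the mapping cylinder structure). Since $|F(M)|$ is a finite simplicial complex, the mapping cone $\widehat{F}(M)$ carries a natural finite CW structure.

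The third step is to compute $g_*$ on homology. By construction $g|_{|\partial\Delta(I)|}\simeq\hat{g}_I$, so $g_*$ carries the fundamental class $[|\partial\Delta(I)|]$ to $\hat{u}_I$. By Corollary \ref{homology hat} the classes $\hat{u}_I$ form a free basis of $\widetilde{H}_d(\widehat{F}(M);\Z)$, and both $\widetilde{H}_*$ vanish in all other positive degrees. Therefore $g$ induces an isomorphism on integral homology. Applying Whitehead's theorem to the map $g$ between simply connected CW complexes completes the argument. There is no real obstacle here beyond verifying simple connectivity of $\widehat{F}(M)$, which is immediate from the choice of $f$.
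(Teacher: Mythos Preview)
Your proposal is correct and follows essentially the same approach as the paper: apply Corollary~\ref{homology hat} together with the J.H.C.\ Whitehead theorem, after noting that both domain and target are simply connected CW complexes. The paper states this in one line, while you have spelled out the verification of simple connectivity and the homology computation in more detail; the parenthetical about ``attaching $2$-cells from the mapping cylinder structure'' is unnecessary, since van Kampen applied to the mapping cone already suffices.
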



  \section{Polyhedral product}\label{polyhedral product}

  Throughout this section, let $K$ be a simplicial complex with vertex set $[m]$. Let $(\underline{X},\underline{A})=\{(X_i,A_i)\}_{i=1}^m$ be a collection of pairs of pointed spaces indexed by vertices of $K$. For $I\subset[m]$, let
  \[
    (\underline{X},\underline{A})^I=Y_1\times\cdots\times Y_m
  \]
  where $Y_i=X_i$ for $i\in I$ and $Y_i=A_i$ for $i\not\in I$. The \emph{polyhedral product} of $(\underline{X},\underline{A})$ over $K$ is defined by
  \[
    Z_K(\underline{X},\underline{A})=\bigcup_{\sigma\in K}(\underline{X},\underline{A})^\sigma.
  \]
  For $\emptyset\ne I\subset[m]$, let $(\underline{X}_I,\underline{A}_I)=\{(X_i,A_i)\}_{i\in I}$. Then we can define $Z_{K_I}(\underline{X}_I,\underline{A}_I)$. The following lemma is immediate from the definition of a polyhedral product.

  \begin{lemma}
    \label{retract}
    For each $\emptyset\ne I\subset[m]$, $Z_{K_I}(\underline{X}_I,\underline{A}_I)$ is a retract of $Z_K(\underline{X},\underline{A})$.
  \end{lemma}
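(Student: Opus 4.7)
The plan is to write down, by hand, a continuous section and retraction at the level of the ambient product $X_1\times\cdots\times X_m$ and check they restrict correctly to the polyhedral products. Concretely, I will define $\iota\colon Z_{K_I}(\underline{X}_I,\underline{A}_I)\to Z_K(\underline{X},\underline{A})$ and $r\colon Z_K(\underline{X},\underline{A})\to Z_{K_I}(\underline{X}_I,\underline{A}_I)$ satisfying $r\circ\iota=\mathrm{id}$.

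For the section, send $(y_i)_{i\in I}$ to the point $(z_1,\ldots,z_m)$ in $X_1\times\cdots\times X_m$ whose $i$-th coordinate is $y_i$ for $i\in I$ and the basepoint of $A_j$ for $j\notin I$. To see that this lands in $Z_K(\underline{X},\underline{A})$, take any simplex $\sigma\in K_I$; since $K_I\subset K$, we have $\sigma\in K$, and every added coordinate lies in $A_j$ because it is the basepoint, so the image of $(\underline{X}_I,\underline{A}_I)^\sigma$ sits inside $(\underline{X},\underline{A})^\sigma\subset Z_K(\underline{X},\underline{A})$.

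For the retraction, restrict the coordinate projection $X_1\times\cdots\times X_m\to\prod_{i\in I}X_i$. For any $\sigma\in K$, the set $\sigma\cap I$ is contained in both $\sigma$ and $I$, so it is a simplex of $K_I$, and the projection carries $(\underline{X},\underline{A})^\sigma$ into $(\underline{X}_I,\underline{A}_I)^{\sigma\cap I}\subset Z_{K_I}(\underline{X}_I,\underline{A}_I)$. Thus $r$ is well defined and continuous, and $r\circ\iota=\mathrm{id}$ is immediate since $\iota$ only inserts coordinates that $r$ then discards.

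There is no real obstacle here; the argument is purely formal. The only point requiring any care is that the definition of $\iota$ uses the basepoint of each $A_j$ for $j\notin I$, which is available since the pairs $(X_i,A_i)$ are pointed, and this is exactly what ensures $\iota$ takes values in the polyhedral product rather than merely in the ambient product.
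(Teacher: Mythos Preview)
Your proof is correct and is exactly the explicit verification the paper has in mind when it says the lemma ``is immediate from the definition of a polyhedral product''; the paper does not spell out the maps, so your write-up simply unpacks that remark.
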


  For a collection of pointed spaces $\underline{X}=\{X_i\}_{i=1}^m$, let $(C\underline{X},\underline{X})=\{(CX_i,X_i)\}_{i=1}^m$. For $0\le i\le m$, we define a subspace of $Z_K(C\underline{X},\underline{X})$ by
  \[
    Z_K^i(C\underline{X},\underline{X})=\{(x_1,\ldots,x_m)\in Z_K(C\underline{X},\underline{X})\mid\text{at least }m-i\text{ of }x_1,\ldots,x_m\text{ are basepoints}\}.
  \]
  Using the basepoint of each $X_i$, we regard $Z_{K_I}(C\underline{X}_I,\underline{X}_I)$ as a subspace of $Z_K(C\underline{X},\underline{X})$ so that we can alternatively write
  \begin{equation}
    \label{union}
    Z_K^i(C\underline{X},\underline{X})=\bigcup_{I\subset[m],\,|I|=i}Z_{K_I}(C\underline{X}_I,\underline{X}_I).
  \end{equation}
  There is a filtration
  \[
    *=Z_K^0(C\underline{X},\underline{X})\subset Z_K^1(C\underline{X},\underline{X})\subset\cdots\subset Z_K^m(C\underline{X},\underline{X})=Z_K(C\underline{X},\underline{X})
  \]
  which we call the \emph{fat-wedge filtration} of $Z_K(C\underline{X},\underline{X})$. By \cite[Theorem 4.1]{IK3}, we have
  \[
    Z_K^i(C\underline{X},\underline{X})/Z_K^{i-1}(C\underline{X},\underline{X})=\bigvee_{I\subset[m],\,|I|=i}|\Sigma K_I|\wedge\widehat{X}^I
  \]
  where $\widehat{X}^I=\bigwedge_{i\in I}X_i$. Moreover, it is shown in \cite[Corollary 4.2]{IK3} that the fat-wedge filtration of $Z_K(C\underline{X},\underline{X})$ splits after a suspension and the decomposition of Bahri, Bendersky, Cohen and Gitler \cite[Theorem 2.2.1]{BBCG} is reproduced as:

  \begin{theorem}
    [BBCG decomposition]
    \label{BBCG}
    There is a homotopy equivalence
    \[
      \Sigma Z_K(C\underline{X},\underline{X})\simeq\Sigma\bigvee_{\emptyset\ne I\subset[m]}|\Sigma K_I|\wedge\widehat{X}^I.
    \]
  \end{theorem}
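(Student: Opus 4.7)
The plan is to refine the fat-wedge filtration
$$
*=Z_K^0\subset Z_K^1\subset\cdots\subset Z_K^m=Z_K(C\underline X,\underline X)
$$
into a stable splitting by proving, inductively in $i$, that the cofibre sequence
$$
Z_K^{i-1}(C\underline X,\underline X)\hookrightarrow Z_K^i(C\underline X,\underline X)\to\bigvee_{|I|=i}|\Sigma K_I|\wedge\widehat X^I
$$
splits after a single suspension. Telescoping and invoking the successive-quotient identification recalled immediately before the theorem then yields the stated equivalence $\Sigma Z_K(C\underline X,\underline X)\simeq\Sigma\bigvee_{\emptyset\ne I\subset[m]}|\Sigma K_I|\wedge\widehat X^I$.

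To produce the splittings I would use Lemma \ref{retract}. For each nonempty $I\subset[m]$ with $|I|=i$, let $r_I\colon Z_K(C\underline X,\underline X)\to Z_{K_I}(C\underline X_I,\underline X_I)$ be the retraction and set
$$
\rho_I\colon Z_K(C\underline X,\underline X)\xrightarrow{r_I}Z_{K_I}(C\underline X_I,\underline X_I)=Z_{K_I}^i\to Z_{K_I}^i/Z_{K_I}^{i-1}=|\Sigma K_I|\wedge\widehat X^I.
$$
Since $r_I$ is coordinate projection, it carries $Z_K^j$ into $Z_{K_I}^{\min(j,i)}$, so $\rho_I$ sends $Z_K^{i-1}$ to the basepoint and, on the wedge decomposition of $Z_K^i/Z_K^{i-1}$, realises the identity on the $|\Sigma K_I|\wedge\widehat X^I$-summand. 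After one suspension, the pinch comultiplication on $\Sigma Z_K^i$ lets me wedge the family $\{\Sigma\rho_J\}_{|J|\le i}$ together into a single map
$$
\Sigma Z_K^i\to\bigvee_{\emptyset\ne J\subset[m],\,|J|\le i}\Sigma\bigl(|\Sigma K_J|\wedge\widehat X^J\bigr),
$$
which, by the inductive hypothesis together with the cofibre computation, is a homotopy equivalence; setting $i=m$ completes the argument.

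The main obstacle is the orthogonality check that $\rho_I$ restricts trivially to every wedge summand other than its own. Given distinct $I,J$ with $|I|=|J|=i$, one has $|I\cap J|<i$, and for the basepoint-preserving inclusion $\iota_J\colon Z_{K_J}(C\underline X_J,\underline X_J)\hookrightarrow Z_K(C\underline X,\underline X)$ the composite $r_I\circ\iota_J$ factors through $Z_{K_{I\cap J}}\subset Z_{K_I}^{|I\cap J|}\subset Z_{K_I}^{i-1}$; hence $\rho_I$ kills the $J$-summand in the top quotient. This bookkeeping explains why one suspension suffices: before suspension there is no single map out of a wedge assembling the various $\rho_I$, but the co-$H$-structure on $\Sigma Z_K^i$ provided by the pinch map does exactly that, and the orthogonality above guarantees the assembled map is a genuine section in the homotopy category.
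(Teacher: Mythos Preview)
The paper does not give an in-text proof of this theorem; it merely records that the fat-wedge filtration of $Z_K(C\underline X,\underline X)$ splits after a suspension, citing \cite[Corollary~4.2]{IK3}, and that this recovers the decomposition of \cite[Theorem~2.2.1]{BBCG}. Your proposal is precisely a sketch of that cited argument---using the retractions of Lemma~\ref{retract} to build projections $\rho_I$ that split each stage of the filtration after one suspension---so your approach is correct and coincides with the one the paper defers to.
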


  In particular, if the BBCG decomposition desuspends, then $Z_K(C\underline{X},\underline{X})$ itself desuspends. Moreover, if each $X_i$ is a connected CW complex, then the BBCG decomposition desuspends whenever $Z_K(C\underline{X},\underline{X})$ desuspends \cite{IK3}. Then we aim to desuspend the BBCG decomposition. Desuspension of the BBCG decomposition was studied for specific Golod complexes such as shifted complexes \cite{IK0,GT,GW}, and desuspension for much broader classes of simplicial complexes, including the previous specific simplicial complexes, can be reproved by using the fat-wedge filtration technique \cite{IK3}.

  The moment-angle complex $Z_K$ introduced in Section \ref{introduction} is the polyhedral product $Z_K(D^2,S^1)$. The \emph{real moment-angle complex} $\R Z_K$ is defined to be the polyhedral product $Z_K(D^1,S^0)$, and we denote its fat-wedge filtration by
  \[
    *=\R Z^0_K\subset\R Z^1_K\subset\cdots\subset\R Z^m_K=\R Z_K
  \]
  where we choose the basepoint of $S^0=\{-1,+1\}$ to be $-1$. The fat-wedge filtration of $\R Z_K$ is proved to be a cone decomposition \cite[Theorem 3.1]{IK3}. For $\emptyset\ne I\subset[m]$, let $j_{K_I}\colon\R Z_{K_I}^{|I|-1}\to\R Z_K^{|I|-1}$ denote the inclusion.

  \begin{theorem}
    \label{FWF RZ}
    For each $\emptyset\ne I\subset[m]$, there is a map $\varphi_{K_I}\colon|K_I|\to\R Z^{|I|-1}_{K_I}$ such that
    \[
      \R Z_K^i=\R Z_K^{i-1}\bigcup_{I\subset[m],\,|I|=i}C|K_I|
    \]
    where the attaching maps are $j_{K_I}\circ\varphi_{K_I}$.
  \end{theorem}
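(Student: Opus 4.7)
The plan is to establish the theorem first in the ``top'' filtration level applied to an arbitrary complex $L$ on a vertex set of size $n$, namely that there is a map $\varphi_L\colon |L|\to\R Z_L^{n-1}$ giving a pushout $\R Z_L=\R Z_L^{n-1}\cup_{\varphi_L} C|L|$, and then bootstrap to all $i$. Granting the top case, for general $i$ I would combine it with equation \eqref{union}, which gives $\R Z_K^i=\bigcup_{|I|=i}\R Z_{K_I}$. The key compatibilities are $\R Z_{K_I}\cap\R Z_K^{i-1}=\R Z_{K_I}^{|I|-1}$ and, for $I\neq I'$ with $|I|=|I'|=i$, $\R Z_{K_I}\cap\R Z_{K_{I'}}=\R Z_{K_{I\cap I'}}\subset\R Z_K^{i-1}$; both follow from the coordinatewise description of the polyhedral product. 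Applying the top-level result to each $K_I$ and gluing along $\R Z_K^{i-1}$ then yields the desired decomposition with attaching maps $j_{K_I}\circ\varphi_{K_I}$.

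To construct $\varphi_K$ in the top case, I would use the realization $\R Z_K\subset[-1,1]^m$ with basepoint $-1\in S^0$, under which $\R Z_K=\{x\in[-1,1]^m\colon\{i\colon x_i\in(-1,1)\}\in K\}$ and $\R Z_K^{m-1}$ is the locus where some $x_i=-1$. Its complement is stratified by open cells
\[
U_\sigma=\{x\colon x_i\in(-1,1)\text{ for }i\in\sigma,\ x_i=1\text{ for }i\notin\sigma\},\qquad\sigma\in K,
\]
with $U_\emptyset$ the single point $(1,\ldots,1)$. For $\xi\in|K|$ with carrier $\sigma$ and barycentric coordinates $(\xi_i)_{i\in\sigma}$, and $t\in[0,1]$, I set
\[
\Phi(\xi,t)_i=\begin{cases}1-2t\,\xi_i/\max_{j\in\sigma}\xi_j & i\in\sigma,\\ 1 & i\notin\sigma.\end{cases}
\]
At $t=0$ this is the constant value $(1,\ldots,1)$, so $\Phi$ descends to $C|K|\to\R Z_K$; its restriction $\varphi_K=\Phi(\cdot,1)$ has some coordinate equal to $-1$ at an argmax vertex, hence lands in $\R Z_K^{m-1}$.

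The main technical step, and the chief obstacle, is to verify that $\Phi$ induces a homeomorphism on quotients $C|K|/|K|\to\R Z_K/\R Z_K^{m-1}$. Continuity of $\Phi$ across simplicial boundaries demands care: when $\xi$ passes from the interior of $\sigma$ to that of a face $\tau\subsetneq\sigma$, each coordinate indexed by $i\in\sigma\setminus\tau$ must tend to $1$, which holds because $\xi_i\to 0$ in the numerator, while $\max_{j\in\sigma}\xi_j\to\max_{j\in\tau}\xi_j>0$ since $\tau\neq\emptyset$, so the formula with carrier $\tau$ is recovered in the limit. Bijectivity of the induced map on $C|K|\setminus|K|\to\R Z_K\setminus\R Z_K^{m-1}$ is then verified by writing down an explicit inverse: given $y\in U_\sigma$, set $u_i=1-y_i>0$ for $i\in\sigma$, and recover $t=\tfrac12\max_{i\in\sigma}u_i\in(0,1)$ and $\xi_i=u_i/\sum_{j\in\sigma}u_j$, with the apex of $C|K|$ mapping to $(1,\ldots,1)\in U_\emptyset$. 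Since $C|K|$ is compact and $\R Z_K$ is Hausdorff, this bijection is a homeomorphism, which closes the top-level case and, together with the reduction above, the full theorem.
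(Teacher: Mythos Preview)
The present paper does not give a proof of this theorem; it is quoted from \cite[Theorem~3.1]{IK3}. Your argument is a correct self-contained proof and follows essentially the same strategy as that reference: reduce to the top filtration level via \eqref{union} together with the compatibilities $\R Z_{K_I}\cap\R Z_K^{i-1}=\R Z_{K_I}^{|I|-1}$ and $\R Z_{K_I}\cap\R Z_{K_{I'}}=\R Z_{K_{I\cap I'}}\subset\R Z_K^{i-1}$ for $|I|=|I'|=i$, and in the top case build an explicit characteristic map $\Phi\colon C|K|\to\R Z_K$ from barycentric data. Your continuity check across faces and your explicit inverse on each stratum $U_\sigma$ are fine; the only cosmetic point is that the concluding compact--Hausdorff step is most cleanly phrased for the canonical map $\R Z_K^{m-1}\cup_{\varphi_K}C|K|\to\R Z_K$ (a continuous bijection from a compact space to a Hausdorff space, hence a homeomorphism), which is equivalent to, and slightly more direct than, the statement about quotients $C|K|/|K|\to\R Z_K/\R Z_K^{m-1}$.
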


  We say that the fat-wedge filtration of $\R Z_K$ is trivial if $\varphi_{K_I}$ is null homotopic for each $\emptyset\ne I\subset[m]$. We remark that $\varphi_{K_I}$ is null homotopic if and only if so is $j_{K_I}\circ\varphi_{K_I}$ because $\R Z_{K_I}^{|I|-1}$ is a retract of $\R Z_K^{|I|-1}$. The fat-wedge filtration is useful for desuspending the BBCG decomposition because we have the following criterion \cite[Theorem 1.2]{IK3}.

  \begin{theorem}
    \label{FWF decomposition}
    If the fat-wedge filtration of $\R Z_K$ is trivial then for any $\underline{X}$,
    \[
      Z_K(C\underline{X},\underline{X})\simeq\bigvee_{\emptyset\ne I\subset[m]}|\Sigma K_I|\wedge\widehat{X}^I.
    \]
  \end{theorem}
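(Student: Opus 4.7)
The plan is to promote the cone decomposition of $\R Z_K$ in Theorem \ref{FWF RZ} to an analogous cone decomposition of $Z_K(C\underline{X},\underline{X})$ for an arbitrary collection $\underline{X}$, and then read off the wedge decomposition directly from the triviality hypothesis.

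First I would install a cone structure on the fat-wedge filtration of $Z_K(C\underline{X},\underline{X})$. Using the union description \eqref{union} together with the cited identification $Z_K^i(C\underline{X},\underline{X})/Z_K^{i-1}(C\underline{X},\underline{X})=\bigvee_{|I|=i}|\Sigma K_I|\wedge\widehat{X}^I$, the two pieces $Z_K^{i-1}(C\underline{X},\underline{X})$ and $Z_{K_I}(C\underline{X}_I,\underline{X}_I)$ with $|I|=i$ meet precisely in $Z_{K_I}^{i-1}(C\underline{X}_I,\underline{X}_I)$, and the quotient of this pair is $|\Sigma K_I|\wedge\widehat{X}^I$. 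An argument parallel to the proof of Theorem \ref{FWF RZ} should then produce a pushout description
\[
  Z_K^i(C\underline{X},\underline{X})=Z_K^{i-1}(C\underline{X},\underline{X})\bigcup_{I\subset[m],\,|I|=i}C\bigl(|K_I|\wedge\widehat{X}^I\bigr)
\]
whose attaching maps $\psi_{K_I,\underline{X}}\colon|K_I|\wedge\widehat{X}^I\to Z_K^{i-1}(C\underline{X},\underline{X})$ remain to be determined.

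The crucial step is to identify $\psi_{K_I,\underline{X}}$ with the expected smash product. Concretely, I would show by induction on $|I|$ that $\psi_{K_I,\underline{X}}$ factors, up to homotopy, as
\[
  |K_I|\wedge\widehat{X}^I\xrightarrow{\varphi_{K_I}\wedge\mathrm{id}}\R Z_{K_I}^{|I|-1}\wedge\widehat{X}^I\xrightarrow{\,j_{K_I}\wedge\mathrm{id}\,}\R Z_K^{|I|-1}\wedge\widehat{X}^I\hookrightarrow Z_K^{i-1}(C\underline{X},\underline{X}),
\]
where the final inclusion is induced coordinate-wise by the map of pairs $(D^1,S^0)\to(CX_j,X_j)$ sending the non-basepoint of $S^0$ into $X_j$ and extending conically. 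This is a naturality statement for the construction of $\varphi_{K_I}$ in \cite{IK3}: the characteristic map of the top cone in $\R Z_K$ becomes the characteristic map of the top cone in $Z_K(C\underline{X},\underline{X})$ after smashing with $\widehat{X}^I$, because both cones are built from the same combinatorial pattern on $K_I$ while the $X_j$-coordinates merely track along the cone direction.

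Once this identification is in place, the assumption $\varphi_{K_I}\simeq\ast$ for every $\emptyset\ne I\subset[m]$ forces each $\psi_{K_I,\underline{X}}$ to be null homotopic as well, so the cone attachment at every stage collapses to a wedge:
\[
  Z_K^i(C\underline{X},\underline{X})\simeq Z_K^{i-1}(C\underline{X},\underline{X})\vee\bigvee_{I\subset[m],\,|I|=i}|\Sigma K_I|\wedge\widehat{X}^I.
\]
Induction on $i$ up to $i=m$ yields the desired equivalence. The principal obstacle is the middle step, namely matching the attaching maps of $Z_K(C\underline{X},\underline{X})$ with $\varphi_{K_I}\wedge\mathrm{id}$; once this naturality is secured, the triviality hypothesis makes the remaining induction purely formal.
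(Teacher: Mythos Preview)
The paper does not prove this theorem; it simply quotes it as \cite[Theorem~1.2]{IK3}. Your outline is therefore not competing against an argument in the present paper but against the one in \cite{IK3}, and at the level of strategy it is the same: one shows that the fat-wedge filtration of $Z_K(C\underline{X},\underline{X})$ is itself a cone decomposition whose attaching maps are controlled by the maps $\varphi_{K_I}$ for $\R Z_K$, and then the triviality of the latter forces every stage to split as a wedge.

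The one place I would push back is your displayed factorization through $\R Z_{K_I}^{|I|-1}\wedge\widehat{X}^I$. The ``final inclusion'' you describe, built from coordinate maps $(D^1,S^0)\to(CX_j,X_j)$ that send $+1$ to a chosen point of $X_j$, does not give a single map out of $\R Z_{K_I}^{|I|-1}$; it gives a family parametrised by $\prod_{j\in I}X_j$, i.e.\ a map $\R Z_{K_I}^{|I|-1}\times\prod_{j\in I}X_j\to Z_{K_I}^{|I|-1}(C\underline{X}_I,\underline{X}_I)$, and this does not obviously factor through the smash product (sending some $x_j$ to the basepoint does not collapse the image). In \cite{IK3} the actual statement is that the attaching map $\Phi_{K_I}\colon|K_I|\wedge\widehat{X}^I\to Z_{K_I}^{|I|-1}(C\underline{X}_I,\underline{X}_I)$ is a \emph{lift} of $\varphi_{K_I}$ along a natural map $Z_{K_I}^{|I|-1}(C\underline{X}_I,\underline{X}_I)\to\R Z_{K_I}^{|I|-1}$ (together with a compatibility that lets a null-homotopy of $\varphi_{K_I}$ be promoted to one of $\Phi_{K_I}$), rather than a factorization \emph{through} $\R Z_{K_I}^{|I|-1}\wedge\widehat{X}^I$. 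Once you replace your middle step by that lifting statement, the rest of your induction is exactly the argument in \cite{IK3}.
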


  For $\emptyset\ne I\subset[m]$, define a map $\alpha_I\colon\R Z_{K_I}^{|I|-1}\to\R Z_K^{m-1}$ by $\alpha_I(x_i\mid i\in I)=(y_1,\ldots,y_m)$ such that
  \[
    y_i=
    \begin{cases}
      x_i&i\in I\\
      +1&i\not\in I
    \end{cases}
  \]
  for $(x_i\mid i\in I)\in\R Z_{K_I}^{|I|-1}$. Note that $\alpha_I$ is not the natural inclusion because the basepoint of $S^0=\{-1,+1\}$ is taken to be $-1$ as mentioned above. For $\emptyset\ne J\subset I\subset[m]$ and $|J|\le i\le |I|$, let $\pi$ denote the composite of projections
  \[
    \R Z_{K_I}^i\to\R Z_{K_J}\to\R Z_{K_J}/\R Z_{K_J}^{|J|-1}=|\Sigma K_J|.
  \]
  By the construction of $\varphi_K$, we have:

  \begin{lemma}
    \label{FWF naturality}
    For $\emptyset\ne J\subsetneq I\subset[m]$, there is a commutative diagram
    \[
      \xymatrix{
        |K_I|\ar[r]^{\varphi_{K_I}}\ar[d]&\R Z_{K_I}^{|I|-1}\ar[r]^\pi\ar[d]^{\alpha_I}&|\Sigma K_J|\ar[d]^{|\Sigma j|}\\
        |K|\ar[r]^{\varphi_K}&\R Z_K^{m-1}\ar[r]^(.4)\pi&|\Sigma K_{J\sqcup([m]-I)}|
      }
    \]
    where $j\colon K_J\to K_{J\sqcup([m]-I)}$ is the inclusion.
  \end{lemma}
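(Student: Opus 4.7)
The plan is to verify both squares by direct computation from the explicit construction of the attaching map $\varphi_K$ given in the proof of Theorem \ref{FWF RZ} (namely \cite[Theorem 3.1]{IK3}). Recall that for $x \in |K|$ with support $\sigma \in K$, the image $\varphi_K(x) = (y_1, \ldots, y_m)$ is built so that $y_i = +1$ for every $i \notin \sigma$, while the coordinates $y_i$ for $i \in \sigma$ are given by a universal formula in the barycentric coordinates $(t_j)_{j \in \sigma}$ of $x$; the formula is arranged so that at least one $y_i$ for $i \in \sigma$ equals $-1$, ensuring $\varphi_K(x)$ lies in $\R Z_K^{m-1}$. The choice $y_i = +1$ (not the basepoint $-1$) on the inactive coordinates is precisely why $\alpha_I$, and not the natural inclusion $j_{K_I}$, appears in the statement.

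For the left square, I would fix $x \in |K_I|$ with support $\sigma \subset I$ and compare $\alpha_I(\varphi_{K_I}(x))$ and $\varphi_K(x)$ coordinate by coordinate. On the $I$-coordinates the two agree because the universal formula depends only on $(t_j)_{j \in \sigma}$, and on the $([m]-I)$-coordinates both equal $+1$: for $\alpha_I$ by definition, and for $\varphi_K$ because $[m]-I \subset [m]-\sigma$. Hence the two outputs coincide in $\R Z_K^{m-1}$.

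For the right square, I would trace a point $p \in \R Z_{K_I}^{|I|-1}$ along both routes. Going via $\alpha_I$, the padded coordinates on $[m]-I$ are all $+1$ (not the basepoint), so after projecting to the $(J \sqcup ([m]-I))$-coordinates and collapsing $\R Z_{K_{J \sqcup ([m]-I)}}^{|J|+|[m]-I|-1}$, the resulting class in $|\Sigma K_{J \sqcup ([m]-I)}|$ depends only on the $J$-coordinates of $p$. Going the other way, $\pi(p) \in |\Sigma K_J|$ depends on the $J$-coordinates of $p$ by definition, and $|\Sigma j|$ embeds $|\Sigma K_J|$ into $|\Sigma K_{J \sqcup ([m]-I)}|$ as the suspension of the simplicial inclusion $j \colon K_J \hookrightarrow K_{J \sqcup ([m]-I)}$. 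A direct matching of coordinates, using the explicit identification of the top fat-wedge quotient of $\R Z_L$ with $|\Sigma L|$ that is implicit in the definition of $\pi$, then shows the two outputs agree.

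The main obstacle is purely bookkeeping: keeping track of the sign convention on $S^0$ (basepoint $-1$ versus $\alpha_I$'s padding value $+1$). Once this is handled, both verifications reduce to direct comparisons of coordinate formulas delivered by the construction of $\varphi_K$.
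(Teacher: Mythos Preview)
Your proposal is correct and matches the paper's approach: the paper offers no argument beyond the sentence ``By the construction of $\varphi_K$, we have:'' immediately preceding the lemma, so the result is intended to be read off directly from the explicit formulas for $\varphi_K$ in \cite[Theorem~3.1]{IK3}, which is precisely what you do. Your coordinate-by-coordinate check of the left square, and in particular your observation that the $+1$-padding convention for $\alpha_I$ matches the value $\varphi_K$ places on inactive coordinates, is exactly the point; the right square is likewise a matter of unwinding the identification $\R Z_L/\R Z_L^{|V(L)|-1}\cong|\Sigma L|$ from the same construction.
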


  The following two lemmas, proved in \cite[Proof of Theorem 7.2]{IK3} and \cite[Lemma 10.1]{IK3} respectively, are quite useful in detecting the triviality of $\varphi_K$.

  \begin{lemma}
    \label{FWF minimal non-face}
    Let $\overline{K}$ be a simplicial complex obtained by filling all minimal non-faces into $K$. Then $\varphi_K$ factors through the inclusion $|K|\to|\overline{K}|$.
  \end{lemma}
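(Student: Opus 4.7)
The plan is to reduce the factoring statement to the following local claim: for every minimal non-face $I$ of $K$, the composite
\[
  |K_I|\hookrightarrow|K|\xrightarrow{\varphi_K}\R Z_K^{m-1}
\]
is null homotopic. Because $|\overline{K}|$ is obtained from $|K|$ by coning off $|K_I|=|\partial\Delta(I)|$ for each minimal non-face $I$, and because distinct minimal non-faces $I,J$ satisfy $I\not\subset J$ and $J\not\subset I$ (otherwise one of them would fail minimality), the interiors of the newly attached simplices are pairwise disjoint. Hence independent null homotopies assemble into the required extension of $\varphi_K$ to $|\overline{K}|$.

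The first step is to identify the composite above using naturality. By the left square of Lemma \ref{FWF naturality} (which is independent of the auxiliary set $J$), the composite factors as $\alpha_I\circ\varphi_{K_I}$. Thus it suffices to prove that $\varphi_{K_I}\colon|K_I|\to\R Z_{K_I}^{|I|-1}$ is null homotopic whenever $K_I=\partial\Delta(I)$.

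The second step is a direct computation of $\R Z_{\partial\Delta(I)}^{|I|-1}$. The full real moment-angle complex $\R Z_{\partial\Delta(I)}$ is the polyhedral product $(D^1,S^0)^{\partial\Delta(I)}$, i.e.\ the boundary of the cube $(D^1)^I$. Its fat-wedge stage $\R Z_{\partial\Delta(I)}^{|I|-1}$ is the locus where at least one coordinate equals the basepoint $-1$; equivalently, it is the union of the $|I|$ codimension-one faces of $(D^1)^I$ meeting at the vertex $(-1,\ldots,-1)$. The straight-line homotopy $H_t(x)=(1-t)x+t(-1,\ldots,-1)$ preserves this subset (a coordinate equal to $-1$ at time $0$ remains $-1$ for all $t$) and contracts it onto that vertex, so $\R Z_{\partial\Delta(I)}^{|I|-1}$ is contractible. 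Hence $\varphi_{K_I}$, being a map into a contractible space, is null homotopic, and so is $\alpha_I\circ\varphi_{K_I}$.

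I do not anticipate a serious obstacle, since each piece falls out of the definitions. The most delicate point is verifying that the individually chosen null homotopies assemble into a map defined on all of $|\overline{K}|$; this is precisely where the disjointness of the added simplex interiors is invoked, allowing one to extend the map one attached cell at a time with no compatibility condition to impose.
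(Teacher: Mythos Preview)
The paper does not give its own proof of this lemma; it cites \cite[Proof of Theorem 7.2]{IK3}. Your argument is correct and self-contained within the present paper. The identification of $\varphi_K\vert_{|\partial\Delta(I)|}$ with $\alpha_I\circ\varphi_{K_I}$ via the left square of Lemma~\ref{FWF naturality} is legitimate---this is exactly how the paper itself uses that lemma in Section~\ref{proof}; your computation that $\R Z_{\partial\Delta(I)}^{|I|-1}$ is the union of the $|I|$ codimension-one faces of $(D^1)^I$ through the vertex $(-1,\ldots,-1)$, hence contractible by the straight-line retraction, is correct; and the assembly is standard once one observes that the only simplices of $\overline{K}$ not already in $K$ are the minimal non-faces themselves (every proper subset of a minimal non-face is a face of $K$), so $|\overline{K}|$ is obtained from $|K|$ by attaching cells whose attaching spheres all lie in $|K|$, and the extensions may therefore be chosen independently.
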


  \begin{lemma}
    \label{FWF pinch}
    If $\varphi_{K_I}\simeq *$ for each $\emptyset\ne I\subsetneq[m]$, then the composite
    \[
      |K|\xrightarrow{\varphi_K}\R Z_K^{m-1}\to \R Z_{K_J}\xrightarrow{\pi}|\Sigma K_J|
    \]
    is null-homotopic for each $\emptyset\ne J\subsetneq[m]$.
  \end{lemma}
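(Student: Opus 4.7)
The plan is to factor the composite through the mapping cone $\R Z_K$ of $\varphi_K$, where it becomes null-homotopic for free.

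I first apply Theorem~\ref{FWF RZ} at $i=m$: the only subset of $[m]$ of cardinality $m$ is $[m]$ itself, and $j_{K_{[m]}}$ is the identity on $\R Z_K^{m-1}$, so
\[
  \R Z_K = \R Z_K^{m-1}\cup_{\varphi_K}C|K|.
\]
This realises $\R Z_K$ as the mapping cone of $\varphi_K$, so the composite $|K|\xrightarrow{\varphi_K}\R Z_K^{m-1}\hookrightarrow\R Z_K$ is canonically null-homotopic, with the null-homotopy given by the cone $C|K|\subset\R Z_K$. On the other hand, the coordinate projection $p\colon\R Z_K\to\R Z_{K_J}$, $(x_1,\ldots,x_m)\mapsto(x_j)_{j\in J}$, is well defined because $\sigma\cap J\in K_J$ for every $\sigma\in K$, so $(\underline{D^1},\underline{S^0})^{\sigma}$ maps into $(\underline{D^1}_J,\underline{S^0}_J)^{\sigma\cap J}$.

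The key step is to identify the map $\R Z_K^{m-1}\to\R Z_{K_J}$ appearing in the statement with the restriction of $p$ to $\R Z_K^{m-1}$. This is where the hypothesis $\varphi_{K_I}\simeq *$ for each $\emptyset\ne I\subsetneq[m]$ enters: under it, the inductive cone decomposition of $\R Z_K^{m-1}$ from Theorem~\ref{FWF RZ} splits compatibly at each stage, and Lemma~\ref{FWF naturality}, applied along an ascending chain $J\subsetneq I\subsetneq[m]$, lets me compare the fat-wedge projection appearing in the statement with $p|_{\R Z_K^{m-1}}$ stage by stage.

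Once this identification is in place, the composite of the lemma factors as
\[
  |K|\xrightarrow{\varphi_K}\R Z_K^{m-1}\hookrightarrow\R Z_K\xrightarrow{p}\R Z_{K_J}\twoheadrightarrow|\Sigma K_J|,
\]
and is null-homotopic because its prefix $|K|\to\R Z_K$ already is. The main obstacle will be the identification step, which amounts to tracing through the inductive cone decomposition of $\R Z_K^{m-1}$ and invoking the hypothesis to reconcile the fat-wedge projection with the coordinate projection; once that bookkeeping is done, the cofibre-sequence argument automatically produces the desired null-homotopy.
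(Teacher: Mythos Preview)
Your mapping-cone idea is correct and in fact proves more than you think, but your description of the ``key step'' reveals a misunderstanding of what the map $\R Z_K^{m-1}\to\R Z_{K_J}$ in the statement is. By the paper's own definition of $\pi$ (just above Lemma~\ref{FWF naturality}), that arrow \emph{is} the coordinate projection restricted to $\R Z_K^{m-1}$; there is nothing to identify, no inductive splitting to trace, and the hypothesis $\varphi_{K_I}\simeq *$ plays no role in making the diagram commute. Your factorisation
\[
  |K|\xrightarrow{\varphi_K}\R Z_K^{m-1}\hookrightarrow\R Z_K\xrightarrow{p}\R Z_{K_J}\to|\Sigma K_J|
\]
is then automatic, and since $\R Z_K=\R Z_K^{m-1}\cup_{\varphi_K}C|K|$ the prefix $|K|\to\R Z_K$ is null-homotopic, finishing the argument.

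The upshot is that your route actually establishes the conclusion \emph{without} invoking the hypothesis at all. The paper does not prove this lemma here; it simply quotes \cite[Lemma~10.1]{IK3}. In that reference the hypothesis is genuinely used because more is proved (the null-homotopy is compatible with a specific wedge decomposition of $\R Z_K^{m-1}$ that only exists once the lower $\varphi_{K_I}$ are trivial), but for the bare statement recorded in this paper your cofibre argument suffices. So: drop the paragraph about the identification step and the appeal to the hypothesis, keep the two lines about the mapping-cone structure and the coordinate projection extending to $\R Z_K$, and you have a clean proof that is shorter than the one being cited.
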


  Finally, we estimate the connectivity of $\R Z_K$.

  \begin{lemma}
    \label{connectivity}
    If $K$ is $k$-neighborly, then $\R Z_K$ is $k$-connected.
  \end{lemma}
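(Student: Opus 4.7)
The plan is to combine two ingredients: (i) that $k$-neighborliness of $K$ forces every full subcomplex $|K_I|$ to be $(k-1)$-connected, and (ii) that the fat-wedge filtration of $\R Z_K$ in Theorem \ref{FWF RZ} builds $\R Z_K$ by successively attaching cones on the spaces $|K_I|$. The $k$-connectivity of $\R Z_K$ then follows by induction on the filtration index.

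For (i), if $|I|\le k+1$ then every subset of $I$ has at most $k+1$ elements and so is a simplex of $K$, whence $K_I=\Delta(I)$ and $|K_I|$ is contractible. If $|I|\ge k+2$, then $K_I$ is itself $k$-neighborly on the vertex set $I$, so $K_I$ contains the full $k$-skeleton of $\Delta(I)$; this $k$-skeleton is $(k-1)$-connected, because removing cells of dimension $\ge k+1$ from the contractible $\Delta(I)$ cannot affect $\pi_j$ for $j\le k-1$. Since $K_I$ is obtained from this $k$-skeleton by attaching simplices of dimension $\ge k+1$, cellular approximation shows that $|K_I|$ is also $(k-1)$-connected.

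For (ii), I induct on $i$ to show that $\R Z_K^i$ is $k$-connected. For $i\le k+1$, the attaching map $j_{K_I}\circ\varphi_{K_I}$ has contractible domain $|K_I|$ and is therefore null-homotopic, so the mapping cone $\R Z_K^{i-1}\cup C|K_I|\simeq\R Z_K^{i-1}\vee\Sigma|K_I|\simeq\R Z_K^{i-1}$ is homotopy equivalent to $\R Z_K^{i-1}$; starting from $\R Z_K^0=*$ this gives that $\R Z_K^{k+1}$ is contractible. For $i\ge k+2$, I attach the cones $C|K_I|$ with $|I|=i$ one at a time. From (i), each $|K_I|$ is $(k-1)$-connected, so the cofiber sequence $|K_I|\to \R Z_K^{i-1}\to \R Z_K^{i-1}\cup C|K_I|$ and its long exact homology sequence give $\widetilde H_j=0$ of the cofiber for $j\le k$. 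Van Kampen (for $k\ge 1$), or direct inspection of path-components (for $k=0$), preserves simple connectivity (resp.\ connectedness), and Hurewicz then upgrades the homology vanishing to $k$-connectivity of the cofiber.

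Iterating over $I$ with $|I|=i$ and then on $i$ from $k+2$ up to $m$ yields that $\R Z_K=\R Z_K^m$ is $k$-connected. The main technical step is the last one: confirming that attaching a cone on a $(k-1)$-connected space to a $k$-connected space preserves $k$-connectivity, via the interplay of the homology cofiber sequence, Van Kampen, and Hurewicz. Extra care is only needed in the low cases $k=0,1$, where ``$(k-1)$-connected'' degenerates to ``non-empty'' or ``path-connected''.
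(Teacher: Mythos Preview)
Your argument is correct, and it takes a genuinely different route from the paper's.

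The paper does not climb the fat-wedge filtration directly. Instead it observes that the inclusion of the $k$-skeleton $K_k\hookrightarrow K$ induces an isomorphism $\pi_j(\R Z_{K_k})\cong\pi_j(\R Z_K)$ for $j\le k$ (since passing from $K_k$ to $K$ only adds cells of dimension $\ge k+2$ to $\R Z$), and then uses $k$-neighborliness to identify $K_k$ with the $k$-skeleton $\Delta^{m-1}_k$ of the full simplex. Because $\Delta^{m-1}_k$ is shifted, the decomposition result of \cite{IK0} gives $\R Z_{\Delta^{m-1}_k}\simeq\bigvee_{\emptyset\ne I\subset[m]}|\Sigma(\Delta^{m-1}_k)_I|$, and each wedge summand is $k$-connected. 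So the paper trades the inductive cofiber/Van Kampen/Hurewicz bookkeeping for an appeal to an external splitting theorem.

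Your approach has the advantage of being self-contained within the machinery already set up in the paper (only Theorem~\ref{FWF RZ} is needed), and it makes transparent exactly how the connectivity of the $|K_I|$ feeds into the connectivity of $\R Z_K$. The paper's approach is shorter on the page but relies on the shifted-complex decomposition from \cite{IK0}, which is a substantial result in its own right. Either way, the key combinatorial input is the same observation you record in (i): $k$-neighborliness forces each $K_I$ to contain the full $k$-skeleton of $\Delta(I)$, hence to be $(k-1)$-connected.
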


  \begin{proof}
    The proof is done by the same calculation as \cite[Proposition 5.3]{IK3}. The proof is alternatively done as follows. By definition, $\pi_*(\R Z_K)$ is isomorphic to $\pi_*(\R Z_{K_k})$ for $*\le k$, where $K_k$ denotes the $k$-skeleton of $K$. Since $K$ is $k$-neighborly, $K_k=\Delta^{m-1}_k$. Since $\Delta^{m-1}_k$ is shifted, it follows from \cite{IK0} that there is a homotopy equivalence
    \[
      \R Z_{\Delta_k^{m-1}}\simeq\bigvee_{\emptyset\ne I\subset[m]}|\Sigma(\Delta^{m-1}_k)_I|.
    \]
    Since each $|\Sigma(\Delta^{m-1}_k)_I|$ is $k$-connected, the proof is done.
  \end{proof}







  \section{Proof of Theorem \ref{main general}}\label{proof}

  Throughout this section, let $M$ be a tight-neighborly triangulation of a closed connected $\F$-orientable $d$-manifold with vertex set $[m]$ unless otherwise is specified. We aim to prove that the fat-wedge filtration of $\R Z_M$ is trivial. First, we compute the fundamental group of $|F(M)_I|$ for $\emptyset\ne I\subset[m]$.

  \begin{lemma}
    \label{pi_1}
    For each $\emptyset\ne I\subset[m]$, $\pi_1(|F(M)_I|)$ is a free group.
  \end{lemma}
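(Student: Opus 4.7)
The plan is to first reduce the claim to showing that $\pi_1(|M_I|)$ is free, and then to establish this by induction on $|I|$.

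For the reduction, I would observe that $F(M)_I$ is obtained from $M_I$ by attaching precisely those minimal non-faces of $M$ of cardinality $d+1$ which are contained in $I$. Each such non-face appears as a single $d$-simplex whose entire boundary already lies in $M_I$ (since every proper subset of a minimal non-face is a simplex of $M$). Because $d\ge 3$, attaching $d$-cells does not affect $\pi_1$, so the inclusion $|M_I|\hookrightarrow|F(M)_I|$ induces an isomorphism on fundamental groups, and it suffices to show $\pi_1(|M_I|)$ is free.

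I would then induct on $|I|$. The base cases $|I|\le 2$ are immediate, since $M$ is neighborly by Lemma \ref{tight neighborly} together with Theorem \ref{tight vs tight-neighborly}, so $M_I$ is a point or an edge. For the inductive step, choose $v\in I$ and set $I'=I\setminus\{v\}$. Decomposing $|M_I|=|M_{I'}|\cup|\mathrm{star}_{M_I}(v)|$ with intersection $|\lk_{M_I}(v)|=|(\lk_M(v))_{I'}|$, and noting that the star is contractible, van Kampen's theorem presents $\pi_1(|M_I|)$ as the quotient of the (inductively free) group $\pi_1(|M_{I'}|)$ by the normal closure of the image of $\pi_1(|\lk_{M_I}(v)|)$.

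The main obstacle is that a quotient of a free group by an arbitrary normal subgroup need not be free, so I must control this image carefully. The idea is to exploit Lemma \ref{link}: $\lk_M(v)$ is a stacked $(d-1)$-sphere, hence $\lk_{M_I}(v)$ is a full subcomplex of a stacked sphere. I would set up a parallel induction on dimension showing that full subcomplexes of stacked spheres have free $\pi_1$, with base case $d-1=2$ handled by the fact that a stacked $2$-sphere is a triangulation of $S^2$, so its full subcomplexes are planar $2$-complexes whose complements in $S^2$ are disjoint unions of open disks and therefore have free $\pi_1$. The delicate final step is to verify that the image of $\pi_1(|\lk_{M_I}(v)|)$ sits as a free factor in $\pi_1(|M_{I'}|)$; I would approach this using the connected-sum decomposition $\lk_M(v)=\partial\Delta(V(v,1))\#\cdots\#\partial\Delta(V(v,n_v))$ from Lemma \ref{link}, which lets one present the link's $\pi_1$ on generators each arising from a summand, and match each such generator to an independent generator of $\pi_1(|M_{I'}|)$ so that the van Kampen relations cleanly reduce a free presentation to another free presentation.
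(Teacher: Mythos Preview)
Your reduction step is fine: $F(M)_I$ is indeed obtained from $M_I$ by attaching the $d$-simplices corresponding to minimal non-faces of $M$ of cardinality $d+1$ contained in $I$, and since $d\ge 3$ this does not change $\pi_1$. The gap is in the ``delicate final step,'' which you correctly flag but do not resolve. Knowing that $\pi_1(|M_{I'}|)$ and $\pi_1(|\lk_{M_I}(v)|)$ are both free, together with the connected-sum description of $\lk_M(v)$ from Lemma~\ref{link}, does not give you any control over the homomorphism $\pi_1(|\lk_{M_I}(v)|)\to\pi_1(|M_{I'}|)$. The suggestion to ``match each such generator to an independent generator of $\pi_1(|M_{I'}|)$'' has no content: a loop in the link is a $1$-cycle sitting somewhere in $M_{I'}$, and nothing you have written pins down its image in $\pi_1(|M_{I'}|)$, let alone shows that these images form part of a free basis. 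Without this, van Kampen only gives you a quotient of a free group, which need not be free.

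The paper avoids this difficulty precisely by \emph{not} passing back to $M_I$. Working with $F(M)_I$ throughout, the link $\lk_{F(M)_I}(v)$ is contained in $(\Delta(V(v,1))\circ\cdots\circ\Delta(V(v,n_v)))_{I-v}\subset F(M)_{I-v}$ by Proposition~\ref{local F} and Lemma~\ref{link neighborly}; each component of this union of full simplices is contractible, so the inclusion $|\lk_{F(M)_I}(v)|\hookrightarrow|F(M)_{I-v}|$ is null-homotopic. This immediately yields $|F(M)_I|\simeq|F(M)_{I-v}|\vee|\Sigma\lk_{F(M)_I}(v)|$, and an induction shows $|F(M)_I|$ is a suspension. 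In other words, the filled $d$-simplices you discarded in your reduction are exactly what make the link inclusion null-homotopic and the argument go through; by stripping them out you are left with a van Kampen computation you cannot finish.
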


  \begin{proof}
    Since the fundamental group of a suspension is a free group, we prove $|F(M)_I|$ is a suspension by induction on $I$. For $|I|=1$, $|F(M)_I|$ is obviously a suspension. Suppose that $|F(M)_{I-v}|$ is a suspension for $v\in I$. Note that
    \begin{equation}
      \label{dl-lk}
      F(M)_I=F(M)_{I-v}\cup(\lk_{F(M)_I}(v)*v)
    \end{equation}
    where $F(M)_{I-v}\cap(\lk_{F(M)_I}(v)*v)=\lk_{F(M)_I}(v)$. Since $\lk_{F(M)_I}(v)=\lk_{F(M)}(v)_{I-v}$, it follows from Proposition \ref{local F} that there are inclusions
    \[
      \lk_{F(M)_I}(v)\subset(\Delta(V(v,1))\circ\cdots\circ\Delta(V(v,n_v)))_{I-v}\subset F(M)_{I-v}.
    \]
    Since $M$ is neighborly by Theorem \ref{tight vs tight-neighborly}, so is $M_{I-v}$, implying $F(M)_{I-v}$ is connected. On the other hand, each component of $(\Delta(V(v,1))\circ\cdots\circ\Delta(V(v,n_v)))_{I-v}$ is contractible. Then the inclusion $|(\Delta(V(v,1))\circ\cdots\circ\Delta(V(v,n_v)))_{I-v}|\to|F(M)_{I-v}|$ is null-homotopic, and so the inclusion $|\lk_{F(M)_I}(v)|\to|F(M)_{I-v}|$ is null-homotopic too. Thus by \eqref{dl-lk}, we get a homotopy equivalence
    \[
      |F(M)_I|\simeq|F(M)_{I-v}|\vee|\Sigma\lk_{F(M)_I}(v)|.
    \]
    Since $|F(M)_{I-v}|$ is a suspension by the induction hypothesis, $|F(M)_I|$ turns out to be a suspension, completing the proof.
  \end{proof}

  Let $\emptyset\ne I\subset[m]$. By Lemma \ref{FWF minimal non-face}, the map $\varphi_{M_I}$ decomposes as
  \begin{equation}
    \label{varphi}
    |M_I|\to |F(M)_I|\to\R Z_{M_I}^{|I|-1}.
  \end{equation}
  By Lemma \ref{pi_1}, there is a map $f_I\colon B_I\to|F(M)_I|$, where $B_I$ is a wedge of circles, such that $f_I$ is an isomorphism in $\pi_1$. Let $\widehat{F}(M)_I$ denote the cofiber of $f_I$, where $\widehat{F}(M)_{[m]}$ coincides with $\widehat{F}(M)$ in Section \ref{F(M)}. On the other hand, since $M$ is neighborly by Lemma \ref{tight neighborly}, so is $M_J$ for any $\emptyset\ne J\subset[m]$. Then by \eqref{union} and Lemma \ref{connectivity}, we can see that $\R Z_{M_I}^{|I|-1}$ is simply-connected. In particular, there is a commutative diagram
  \begin{equation}
    \label{naturality 1}
    \xymatrix{
      |F(M)_I|\ar[r]\ar[d]&\widehat{F}(M)_I\ar[d]\\
      \R Z_{M_I}^{|I|-1}\ar@{=}[r]&\R Z_{M_I}^{|I|-1}.
    }
  \end{equation}
  Then by combining \eqref{varphi} and \eqref{naturality 1}, we get:

  \begin{lemma}
    \label{varphi decomp}
    For each $\emptyset\ne I\subset[m]$, the map $\varphi_{M_I}$ factors through the inclusion $|M_I|\to\widehat{F}(M)_I$.
  \end{lemma}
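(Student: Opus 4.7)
The plan is to read the lemma off directly from the two ingredients already prepared in the paragraph preceding it: the factorization \eqref{varphi} of $\varphi_{M_I}$ through $|F(M)_I|$, supplied by Lemma \ref{FWF minimal non-face}, and the commutative square \eqref{naturality 1}, whose right vertical arrow extends the map $|F(M)_I| \to \R Z_{M_I}^{|I|-1}$ across the mapping cone of $f_I \colon B_I \to |F(M)_I|$. Concatenating these two diagrams produces
\[
|M_I| \to |F(M)_I| \to \widehat{F}(M)_I \to \R Z_{M_I}^{|I|-1}
\]
whose overall composite is $\varphi_{M_I}$, giving the claimed factorization through the inclusion $|M_I| \to \widehat{F}(M)_I$.

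The substantive content is therefore the construction of the right vertical arrow in \eqref{naturality 1}. Since $\widehat{F}(M)_I$ is defined as the cofibre of $f_I$ and $B_I$ is a wedge of circles, such an extension exists as soon as the composite $B_I \to |F(M)_I| \to \R Z_{M_I}^{|I|-1}$ is null-homotopic, and this in turn reduces to the assertion that $\R Z_{M_I}^{|I|-1}$ is simply connected. This simple-connectedness is precisely the point flagged in the text just above the lemma: by Lemma \ref{tight neighborly} together with Theorem \ref{tight vs tight-neighborly}, every full subcomplex $M_J$ with $\emptyset \ne J \subset I$ is neighborly, so Lemma \ref{connectivity} gives that each $\R Z_{M_J}$ is $1$-connected. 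By \eqref{union} the filtration piece $\R Z_{M_I}^{|I|-1}$ is the union of these $\R Z_{M_J}$ for $J \subsetneq I$, and a van Kampen argument applied to a suitable open cover derived from this union yields $\pi_1(\R Z_{M_I}^{|I|-1}) = 0$.

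Once the extension is secured, commutativity of \eqref{naturality 1} is automatic from the universal property of the cofibre, and combining with \eqref{varphi} finishes the argument. The main (and only mildly delicate) obstacle is the simple-connectedness of $\R Z_{M_I}^{|I|-1}$; beyond that, the lemma is a formal diagram-chase of the factorizations already established in Sections \ref{F(M)} and \ref{polyhedral product}, and no new homotopy-theoretic input is needed.
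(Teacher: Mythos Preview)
Your proposal is correct and follows exactly the paper's approach: the lemma is obtained by combining the factorization \eqref{varphi} with the commutative square \eqref{naturality 1}, and the only substantive point---the simple-connectedness of $\R Z_{M_I}^{|I|-1}$ needed to extend over the cofibre---is handled via neighborliness of the $M_J$ and Lemma~\ref{connectivity}, just as the paper does in the paragraph preceding the lemma. Your write-up is, if anything, slightly more explicit than the paper's one-line ``by combining \eqref{varphi} and \eqref{naturality 1}.''
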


  \begin{proposition}
    \label{I<m}
    For each $\emptyset\ne I\subsetneq[m]$, the map $\varphi_{M_I}$ is null-homotopic.
  \end{proposition}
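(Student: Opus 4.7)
I would proceed by induction on $|I|$, with the trivial base case $|I|=1$ (where $|M_I|$ is a point). For the inductive step, fix $\emptyset\ne I\subsetneq[m]$ and assume $\varphi_{M_J}\simeq*$ for every $\emptyset\ne J\subsetneq I$. By Lemma~\ref{varphi decomp} the map $\varphi_{M_I}$ factors through the canonical map $|M_I|\to\widehat F(M)_I$, so it suffices to show this latter map is null-homotopic.

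The central structural step is to extend Corollary~\ref{g hat} from $I=[m]$ to arbitrary $I\subseteq[m]$: the space $\widehat F(M)_I$ is $(d-1)$-connected with $H_d(\widehat F(M)_I;\mathbb{Z})$ free abelian, and hence homotopy equivalent to a wedge of $d$-spheres indexed by $\{I'\in S(M):I'\subseteq I\}$. I would establish this by a nested induction on $|I|$, running the Mayer--Vietoris argument of Proposition~\ref{homology} on the decomposition $F(M)_I=F(M)_{I-v}\cup(\lk_{F(M)_I}(v)*v)$: the local description of $\lk_{F(M)}(v)$ in Proposition~\ref{local F}(1) governs the homology contributions, while $\F$-tightness of $M$ (Theorem~\ref{tight vs tight-neighborly}) together with the explicit topological classification of $|M|$ yield the vanishing of intermediate integral homology of $M_I$.

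Granting the structural step, since $I\subsetneq[m]$ we have $|M_I|\simeq|M|-|M_{[m]-I}|$ by \cite[Lemma~70.1]{M}, an open subset of the closed orientable $d$-manifold $|M|$. Hence $|M_I|$ is an open $d$-manifold, so $H_d(|M_I|;\mathbb{Z})=0$, and $|M_I|$ has cellular dimension at most $d$. Because $\widehat F(M)_I$ is $(d-1)$-connected with $\pi_d\widehat F(M)_I$ a free abelian group (by Hurewicz), obstruction theory identifies $[|M_I|,\widehat F(M)_I]$ with $H^d(|M_I|;\pi_d\widehat F(M)_I)$; the $\operatorname{Hom}$ summand of the universal-coefficient decomposition vanishes by $H_d(|M_I|;\mathbb{Z})=0$, and the $\operatorname{Ext}$ summand---arising from possible torsion in $H_{d-1}(|M_I|;\mathbb{Z})$---is controlled by the explicit topological type of $|M|$ from Theorem~\ref{tight vs tight-neighborly}.

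The main obstacle will be the structural step: although the global case $I=[m]$ is handled cleanly in Section~\ref{F(M)}, extending the homology computation to arbitrary induced subcomplexes $F(M)_I$ requires carefully propagating the vanishing of intermediate integral homology through the Mayer--Vietoris induction at every stage, which relies essentially on tight-neighborliness of $M$ via the connected-sum-of-spheres structure of vertex links (Lemma~\ref{link}) together with Proposition~\ref{local F}.
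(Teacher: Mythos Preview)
Your approach is structurally aligned with the paper's: both factor $\varphi_{M_I}$ through $\widehat F(M)_I$ via Lemma~\ref{varphi decomp} and then argue that $|M_I|\to\widehat F(M)_I$ is null-homotopic using the $(d-1)$-connectivity of the target. However, the paper's execution is considerably simpler. It observes that since $I\subsetneq[m]$, the space $|M_I|$ is homotopy equivalent to a CW complex of dimension at most $d-1$: indeed $|M_I|\simeq|M|-|M_{[m]-I}|$ is (up to homotopy) a compact PL $d$-manifold with nonempty boundary, which collapses onto a $(d-1)$-dimensional spine. Since $\widehat F(M)_I$ is $(d-1)$-connected---and this follows already from the inductive homology computation in the proof of Proposition~\ref{homology}, without needing the full wedge-of-spheres identification---any map from a $(d-1)$-dimensional complex into it is null-homotopic. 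No obstruction theory and no $\operatorname{Ext}$ analysis is required; note too that your outer induction hypothesis $\varphi_{M_J}\simeq*$ for $J\subsetneq I$ is never actually invoked and can be dropped.

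The one genuinely soft spot in your proposal is the assertion that the $\operatorname{Ext}$ summand ``is controlled by the explicit topological type of $|M|$ from Theorem~\ref{tight vs tight-neighborly}''. Knowing the homeomorphism type of $|M|$ does not by itself bound torsion in $H_{d-1}(|M_I|;\Z)$ for an arbitrary full subcomplex, and in the non-$\Z$-orientable case (characteristic $2$) Lefschetz duality over $\Z$ is not directly available to rescue you. The cleanest repair is exactly the dimension reduction the paper uses, after which the $\operatorname{Ext}$ issue disappears entirely and your obstruction-theoretic detour becomes unnecessary.
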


  \begin{proof}
    As is computed in the proof of Proposition \ref{homology}, $\widetilde{H}_*(F(M)_I;\Z)=0$ unless $*=1,d$. Thus as well as $\widehat{F}(M)$, we can see that $\widehat{F}(M)_I$ is $(d-1)$-connected. Since $I\ne[m]$, $|M_I|$ is homotopy equivalent to a CW complex of dimension $\le d-1$. Then we obtain that the inclusion $|M_I|\to\widehat{F}(M)_I$ is null-homotopic. Thus by Lemma \ref{varphi decomp}, the proof is complete.
  \end{proof}

  It remains to show $\varphi_M$ is null-homotopic. By Lemma \ref{FWF naturality}, there is a commutative diagram
  \[
    \xymatrix{
      \bigvee_{I\in S(M)}|M_I|\ar[rr]\ar[d]_{\bigvee_{I\in S(M)}\varphi_{M_I}}&&|M|\ar[d]^{\varphi_M}\\
      \bigvee_{I\in S(M)}\R Z_{M_I}^{d+1}\ar[rr]^(.58){\bigvee_{I\in S(M)}\alpha_I}&&\R Z_M^{m-1}.
    }
  \]
  Then since $F(M)_I=\partial\Delta(I)$ for $I\in S(M)$ by Proposition \ref{local F}, we get a commutative diagram
  \begin{equation}
    \label{naturality 2}
    \xymatrix{
      \bigvee_{I\in S(M)}|\partial\Delta(I)|\ar[rr]^(.6){\bigvee_{I\in S(M)}g_I}\ar[d]&&|F(M)|\ar[d]\\
      \bigvee_{I\in S(M)}\R Z_{M_I}^{d+1}\ar[rr]^(.6){\bigvee_{I\in S(M)}\alpha_I}&&\R Z_M^{m-1}.
    }
  \end{equation}
  Juxtaposing the commutative diagrams \eqref{naturality 1} and \eqref{naturality 2}, we get a commutative diagram
  \[
    \xymatrix{
      \bigvee_{I\in S(M)}|\partial\Delta(I)|\ar[rr]^(.6)g\ar[d]&&\widehat{F}(M)\ar[d]\\
      \bigvee_{I\in S(M)}\R Z_{M_I}^{d+1}\ar[rr]^(.6){\bigvee_{I\in S(M)}\alpha_I}&&\R Z_M^{m-1}
    }
  \]
  and by Corollary \ref{g hat} and Lemma \ref{varphi decomp}, we obtain:

  \begin{lemma}
    \label{phi decomposition}
    The map $\varphi_M\colon|M|\to\R Z_M^{m-1}$ is homotopic to the composite
    \[
      |M|\to\widehat{F}(M)\xrightarrow{g^{-1}}\bigvee_{I\in S(M)}|\partial\Delta(I)|\to\bigvee_{I\in S(M)}\R Z_{M_I}^{d+1}\xrightarrow{\bigvee_{I\in S(M)}\alpha_I}\R Z_M^{m-1}.
    \]
  \end{lemma}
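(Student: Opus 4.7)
The plan is to chain together three inputs already assembled above: the factorization given by Lemma \ref{varphi decomp}, the two naturality diagrams \eqref{naturality 1} and \eqref{naturality 2}, and the homotopy equivalence of Corollary \ref{g hat}. No new geometric construction is needed; the argument is essentially a diagram chase.

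First, I would apply Lemma \ref{varphi decomp} with $I=[m]$ to obtain a map $h\colon\widehat{F}(M)\to\R Z_M^{m-1}$ such that $\varphi_M$ is homotopic to the composite $|M|\to\widehat{F}(M)\xrightarrow{h}\R Z_M^{m-1}$. The task then reduces to identifying $h$, up to homotopy, with $(\bigvee\alpha_I)\circ(\text{left vertical map})\circ g^{-1}$, where the left vertical map is the downward arrow in diagram \eqref{naturality 2}. Juxtaposing \eqref{naturality 1} with \eqref{naturality 2} shows precisely this: diagram \eqref{naturality 1} identifies the right-hand vertical map of \eqref{naturality 2}, namely $|F(M)|\to\R Z_M^{m-1}$, with $h$ precomposed with the quotient $|F(M)|\to\widehat{F}(M)$, and \eqref{naturality 2} then equates the two paths around its outer square. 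Hence $h\circ g$ is homotopic to $(\bigvee\alpha_I)\circ(\text{left vertical map})$. Since $g$ is a homotopy equivalence by Corollary \ref{g hat}, we invert it and substitute back into the factorization of $\varphi_M$ to obtain the claimed composite.

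The only obstacle is purely bookkeeping: one must check that the two diagrams are built from a single compatible choice of map $\widehat{F}(M)\to\R Z_M^{m-1}$, and that the naturality statement in Lemma \ref{FWF naturality}, applied to each $I\in S(M)$ (so $|I|=d+2$ and $|I|-1=d+1$, matching the indexing $\R Z_{M_I}^{d+1}$), really produces the wedge of $\alpha_I$'s appearing in the stated composite. With these identifications, the juxtaposed diagram commutes up to homotopy, and the lemma follows at once.
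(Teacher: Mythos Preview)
Your proposal is correct and follows essentially the same route as the paper: the paper assembles diagram \eqref{naturality 2} from Lemma \ref{FWF naturality} and Proposition \ref{local F}, juxtaposes it with \eqref{naturality 1} (for $I=[m]$) to obtain the commutative square with $g$ along the top and $h$ down the right, and then invokes Corollary \ref{g hat} and Lemma \ref{varphi decomp} exactly as you outline. Your bookkeeping caveat about matching the map $\widehat{F}(M)\to\R Z_M^{m-1}$ across the two diagrams is precisely what the juxtaposition step in the paper handles.
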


  We will investigate the composite of maps in Lemma \ref{phi decomposition} by identifying a homotopy set with a homology.

  \begin{lemma}
    \label{Hopf}
    Let $W$ be a finite wedge of $S^d$. Then there is an isomorphism of sets
    \[
      [|M|,W]\cong H^d(M;\Z)\otimes H_d(W;\Z)
    \]
    which is natural with respect to maps among finite wedges of $S^d$.
  \end{lemma}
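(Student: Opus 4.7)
The plan is to reduce $[|M|,W]$ to ordinary cohomology via the Postnikov tower of $W$. Two initial observations: $|M|$ is a finite CW complex of dimension $d$ (from the given triangulation), and $W=\bigvee_{i=1}^N S^d$ is $(d-1)$-connected because $d\ge 3$, so by Hurewicz $\pi_d(W)\cong H_d(W;\Z)\cong\Z^N$ is a finitely generated free abelian group.

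First I would verify the general fact that if $X$ is a finite CW complex of dimension at most $d$ and $Y$ is a $(d-1)$-connected pointed space (with $d\ge 2$), then there is a natural bijection $[X,Y]\cong H^d(X;\pi_d(Y))$. This is a Postnikov-tower argument: the bottom of the tower is $Y\to Y[d]=K(\pi_d(Y),d)$, and each principal fibration $Y[n+1]\to Y[n]$ has fiber $K(\pi_{n+1}(Y),n+1)$. In the long exact sequence of pointed sets induced by the fibration, the terms immediately surrounding $[X,Y[n+1]]\to[X,Y[n]]$ are $H^{n+1}(X;\pi_{n+1}(Y))$ and $H^{n+2}(X;\pi_{n+1}(Y))$, both of which vanish for $n\ge d$ since $\dim X\le d$. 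Hence the induced map is a bijection at every stage, and chaining these together with $[X,K(\pi_d(Y),d)]=H^d(X;\pi_d(Y))$ gives the stated identification.

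Applying this with $X=|M|$ and $Y=W$ yields $[|M|,W]\cong H^d(M;H_d(W;\Z))$. Because $H_d(W;\Z)\cong\Z^N$ is free, cohomology commutes with this direct-sum coefficient, i.e., $H^d(M;\Z^N)\cong H^d(M;\Z)\otimes\Z^N$ directly, with no universal-coefficient correction. Combining gives the desired isomorphism $[|M|,W]\cong H^d(M;\Z)\otimes H_d(W;\Z)$, and naturality in $W$ for maps between finite wedges of $S^d$ follows from the naturality in $Y$ of each ingredient: Hurewicz, the Postnikov bijection, and the coefficient identification.

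The main subtlety lies in the Postnikov-tower induction: since $[X,Y]$ is a priori only a pointed set rather than a group, one must check that both the kernel and the cokernel of each map $[X,Y[n+1]]\to[X,Y[n]]$ are controlled, which reduces to the vanishing of two cohomology groups with $\pi_{n+1}(Y)$-coefficients in degrees $>d$. This vanishing is automatic for a $d$-dimensional CW complex, so no genuine difficulty arises.
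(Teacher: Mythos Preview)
Your argument is correct. The paper's own proof is a single sentence invoking the Hopf degree theorem, and your Postnikov-tower/obstruction-theory argument is essentially a proof of (the relevant generalization of) that theorem, so the approaches coincide; you have simply unpacked the black box. One minor simplification: since $W$ is $(d-1)$-connected with $d\ge 3$, the map $W\to W[d]=K(\pi_d(W),d)$ is already a $(d+1)$-equivalence, so $[|M|,W]\cong[|M|,K(H_d(W;\Z),d)]$ in one step, and the stage-by-stage induction up the tower is unnecessary.
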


  \begin{proof}
    Since $\dim M=d$, the statement follows from the Hopf degree theorem.
  \end{proof}

  \begin{lemma}
    \label{injective}
    For each $v\in I\in S(M)$, the natural map
    \[
      H^d(M;\Z)\otimes H_{d-1}(M_{I-v};\Z)\to H^d(M;\Z)\otimes H_{d-1}(M_{[m]-v};\Z)
    \]
    is injective.
  \end{lemma}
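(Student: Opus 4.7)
The plan is to handle the lemma by splitting on the topological type of $|M|$ via Theorem \ref{tight vs tight-neighborly} and using the corresponding form of tightness.

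First, I would verify that $M_{I-v}$ has a simple structure: writing $I = V(v',k) \sqcup v'$ for the canonical $v' \in I$ and $k$ so that $I \in S(M)$, Lemma \ref{link neighborly} gives that $M_I$ is $(d-1)$-neighborly, so for any $v \in I$ the full subcomplex $M_{I-v}$ contains every $(d-1)$-face of $\Delta(I-v)$. Hence $M_{I-v}$ equals either $\partial\Delta(I-v) \cong S^{d-1}$, with $H_{d-1}(M_{I-v};\Z) \cong \Z$ generated by the fundamental class $g$, or the contractible simplex $\Delta(I-v)$. In the latter case the source of the map in question is zero and the claim is trivial, so I may assume the former; write $i_* \colon H_{d-1}(M_{I-v};\Z) \to H_{d-1}(M_{[m]-v};\Z)$ for the inclusion-induced map.

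Next, by Theorem \ref{tight vs tight-neighborly}, $|M|$ is one of $S^d$, $(S^1 \times S^{d-1})^{\#k}$, or $(S^1 \widetilde{\times} S^{d-1})^{\#k}$, so $H^d(M;\Z)$ is either free (equal to $0$ or $\Z$) or has only $2$-torsion of exponent $2$. If $M$ is $\Z$-orientable (so $H^d(M;\Z) = \Z$), then $M$ is $\F$-orientable for every field $\F$, and since tight-neighborliness is field-independent, Theorem \ref{tight vs tight-neighborly} yields $\F$-tightness of $M$ for every $\F$; in particular $\Q$-tightness makes
\[
  H_{d-1}(M_{I-v};\Q) \longrightarrow H_{d-1}(M_{[m]-v};\Q) \longrightarrow H_{d-1}(M;\Q)
\]
injective, whence so is its first arrow. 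Because $H_{d-1}(M_{I-v};\Z) = \Z$ is torsion-free, this forces $i_*(g)$ to have infinite order, so $i_*$ is integrally injective; tensoring with $\Z \cong H^d(M;\Z)$ then preserves injectivity and delivers the claim.

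Finally, in the non-$\Z$-orientable case the hypothesis on $M$ forces $\operatorname{char}(\F) = 2$, and $H^d(M;\Z)$ is a (possibly trivial) abelian group of exponent dividing $2$, so the injectivity of $H^d(M;\Z) \otimes i_*$ reduces to that of $\Z/2 \otimes i_*$, i.e., to the non-divisibility $i_*(g) \notin 2\, H_{d-1}(M_{[m]-v};\Z)$. By $\F_2$-tightness from Theorem \ref{tight vs tight-neighborly}, $i_*(g)$ is non-zero in $H_{d-1}(M_{[m]-v};\F_2)$; the universal coefficient theorem provides a natural inclusion $H_{d-1}(M_{[m]-v};\Z)/2 \hookrightarrow H_{d-1}(M_{[m]-v};\F_2)$, forcing $i_*(g) \notin 2\, H_{d-1}(M_{[m]-v};\Z)$, as required. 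The main obstacle lies in this last case, where $\F_2$-nontriviality must be carefully upgraded to non-divisibility by $2$ integrally via the UCT inclusion.
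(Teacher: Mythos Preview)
Your proof is correct and follows the same overall architecture as the paper's: both identify $M_{I-v}$ as either $\partial\Delta(I-v)$ or $\Delta(I-v)$ via Lemma~\ref{link neighborly}, split on whether $|M|$ is $\Z$-orientable, and then invoke tightness over a suitable field to control the inclusion-induced map. The difference is in how the target $H_{d-1}(M_{[m]-v};\Z)$ is treated. The paper first observes that $M_{[m]-v}$ is $M$ with the open star of $v$ removed and, using the topological type from Theorem~\ref{tight vs tight-neighborly}, identifies $|M_{[m]-v}|$ with a wedge of copies of $S^1$ and $S^{d-1}$; hence $H_{d-1}(M_{[m]-v};\Z)$ is free abelian, the natural isomorphisms $H_{d-1}(-;\F)\cong H_{d-1}(-;\Z)\otimes\F$ hold on both sides, and $\F$-injectivity translates directly into the desired statement for either value of $H^d(M;\Z)$. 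You instead avoid computing $|M_{[m]-v}|$ altogether: in the orientable case you switch to $\Q$-coefficients (exploiting that tight-neighborliness is field-independent, so Theorem~\ref{tight vs tight-neighborly} supplies $\Q$-tightness), and in the non-orientable case you use the universal coefficient inclusion $H_{d-1}(-;\Z)/2\hookrightarrow H_{d-1}(-;\F_2)$ to upgrade $\F_2$-nontriviality to non-divisibility by~$2$. Your route is slightly more portable since it never requires the target homology to be free, while the paper's extra computation of $|M_{[m]-v}|$ makes the final deduction cleaner and uniform across the two cases.
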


  \begin{proof}
    By Lemma \ref{link neighborly}, $|M_{I-v}|$ is contractible or $S^{d-1}$, so in particular, $H_{d-1}(M_{I-v};\Z)$ is a free abelian group, and so there is a natural isomorphism
    \begin{equation}
      \label{H_2 K_I}
      H_{d-1}(M_{I-v};\F)\cong H_{d-1}(M_{I-v};\Z)\otimes\F.
    \end{equation}

    By definition, $M_{[m]-v}$ is $M$ removed the open star of $v$, which is homotopy equivalent to $M$ removed the vertex $v$ by \cite[Lemma 70.1]{M}. Then by Theorem \ref{tight vs tight-neighborly}, $|M_{[m]-v}|$ is homotopy equivalent to a wedge of finitely many, possibly zero, copies of $S^1$ and $S^{d-1}$. Then $H_*(M_{[m]-v};\Z)$ is a free abelian group, and so there is a natural isomorphism
    \begin{equation}
      \label{H_2 K}
      H_{d-1}(M_{[m]-v};\F)\cong H_{d-1}(M_{[m]-v};\Z)\otimes\F.
    \end{equation}

    Since $M$ is $\F$-tight by Theorem \ref{tight vs tight-neighborly}, the natural map $H_{d-1}(M_{I-v};\F)\to H_{d-1}(M_{[m]-v};\F)$ is injective. Then by \eqref{H_2 K_I} and \eqref{H_2 K}, the natural map
    \[
      H_{d-1}(M_{I-v};\Z)\otimes\F\to H_{d-1}(M_{[m]-v};\Z)\otimes\F
    \]
    is injective too. Since both $H_{d-1}(M_{I-v};\Z)$ and $H_{d-1}(M_{[m]-v};\Z)$ are free abelian group, the case that $M$ is orientable is proved because $H^d(M;\Z)\cong\Z$. If $M$ is unorientable, then $H^d(M;\Z)\cong\F_2$ and the base field $\F$ is of characteristic 2, where $\F_2$ is the field of two elements. Thus the case that $M$ is unorientable is proved too, completing the proof.
  \end{proof}




  \begin{proposition}
    \label{trivial K}
    The map $\varphi_M\colon|M|\to\R Z_M^{m-1}$ is null-homotopic.
  \end{proposition}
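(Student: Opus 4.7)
The plan is to factor $\varphi_M$ via Lemma \ref{phi decomposition} and then detect null-homotopy by post-composing with the projections $\pi\colon \R Z_M^{m-1} \to |\Sigma M_{[m]-v}|$ for each $v \in [m]$. Lemma \ref{phi decomposition} reduces the problem to showing that the composite
$|M| \xrightarrow{h} \bigvee_{I \in S(M)} |\partial\Delta(I)| \xrightarrow{\bigvee \tilde\varphi^I} \bigvee_{I} \R Z_{M_I}^{d+1} \xrightarrow{\bigvee \alpha_I} \R Z_M^{m-1}$
is null. Fix $v \in [m]$: by Proposition \ref{I<m} combined with Lemma \ref{FWF pinch}, $\pi \circ \varphi_M \simeq \ast$. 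Lemma \ref{FWF naturality} with $J = I - v$ rewrites $\pi \circ \alpha_I$ as $|\Sigma j_I| \circ \pi'$ when $v \in I$, where $j_I\colon M_{I-v} \hookrightarrow M_{[m]-v}$ and $\pi'\colon \R Z_{M_I}^{d+1} \to |\Sigma M_{I-v}|$; when $v \notin I$, the explicit formula for $\alpha_I$ (which plants $+1$ in every coordinate outside $I$, while $\R Z_{M_I}^{d+1}$ forces some coordinate in $I$ to equal $-1$) makes $\pi \circ \alpha_I$ null-homotopic by inspection.

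Next, Lemmas \ref{link} and \ref{link neighborly} endow each $|M_{I-v}| = |M_{V(v,k)}|$ with the homotopy type of $S^{d-1}$ (when $V(v,k) \notin M$) or a point (when $V(v,k) \in M$), so $|\Sigma M_{I-v}|$ is a wedge of $d$-spheres. Lemma \ref{Hopf} therefore identifies $[|M|, |\Sigma M_{I-v}|] \cong H^d(M;\Z) \otimes H_{d-1}(M_{I-v};\Z)$, naturally with respect to $j_I$, and an analogous identification holds for the $d$-dimensional part of $[|M|, |\Sigma M_{[m]-v}|]$ using the topological description of punctured $M$ supplied by Theorem \ref{tight vs tight-neighborly}. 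Translating the null-homotopy of $\pi \circ \varphi_M$ through the factorization yields the vanishing equation
\[
\sum_{I \in S(M),\, v \in I} (|\Sigma j_I|)_\ast(\xi_{I,v}) = 0 \quad \text{in } H^d(M;\Z) \otimes H_{d-1}(M_{[m]-v};\Z),
\]
where $\xi_{I,v}$ denotes the class of $\pi' \circ \tilde\varphi^I \circ h_I$.

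Finally, Lemma \ref{injective} provides individual injectivity of each $(|\Sigma j_I|)_\ast$, but to deduce that every $\xi_{I,v}$ vanishes I need to upgrade this to genuine linear independence of the classes $[\partial\Delta(V(v,k))] \in H_{d-1}(M_{[m]-v};\Z)$ for distinct $I \in S(M)$ containing $v$ with $V(v,k) \notin M$. The justification I have in mind proceeds from Theorem \ref{tight vs tight-neighborly}: each piece $\partial\Delta(V(v,k))$ of the connected-sum decomposition $\lk_M(v) = \partial\Delta(V(v,1)) \# \cdots \# \partial\Delta(V(v,n_v))$ of Lemma \ref{link} should inject into a distinct summand of $H_{d-1}$ of the punctured stacked manifold. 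Granted this, each $\xi_{I,v} = 0$, and as $v$ ranges over $I$ across $I \in S(M)$ we deduce $\alpha_I \circ \tilde\varphi^I \circ h_I \simeq \ast$ for every $I$, yielding $\varphi_M \simeq \ast$. The main obstacle will be precisely this linear-independence step, which requires a careful matching between the link's connected-sum pieces and the $H_{d-1}$-generators of $M_{[m]-v}$ via an unfolding of the stacked handle structure.
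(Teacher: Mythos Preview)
Your overall strategy---factor $\varphi_M$ via Lemma~\ref{phi decomposition}, decompose each $\R Z_{M_I}^{d+1}$ as a wedge of spheres, pass to homology via Lemma~\ref{Hopf}, and detect null-homotopy using Lemmas~\ref{FWF naturality} and~\ref{FWF pinch} together with Lemma~\ref{injective}---is exactly the paper's. The divergence is in the bookkeeping at the last step, and that is where your acknowledged obstacle arises.

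You post-compose with a single projection $\pi\colon\R Z_M^{m-1}\to|\Sigma M_{[m]-v}|$ per vertex $v$, which yields the sum equation $\sum_{I\ni v}(|\Sigma j_I|)_*(\xi_{I,v})=0$ in $H^d(M;\Z)\otimes H_{d-1}(M_{[m]-v};\Z)$. To split this into $\xi_{I,v}=0$ for each $I$ you would need the classes $(j_I)_*[\partial\Delta(I-v)]$ to be linearly independent in $H_{d-1}(M_{[m]-v};\Z)$ as $I$ ranges over $S(M)$ with $v\in I$; Lemma~\ref{injective} only gives individual injectivity, and your proposed resolution via the stacked handle structure is not carried out and would require substantial additional argument.

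The paper never faces this issue because it keeps the pair index throughout: it sets $B=\bigvee_{I\in S(M)}\bigvee_{v\in I}|\Sigma M_{[m]-v}|$, so that $j\colon A\to B$ is the wedge over pairs $(I,v)$ of the individual inclusions $|\Sigma j_I|$. Then $1\otimes j_*$ is a \emph{direct sum} of the maps in Lemma~\ref{injective} and is therefore injective with no cross-$I$ independence required. The remaining work is to show that for each pair $(I,v)$ the single composite $|M|\xrightarrow{f}A\to|\Sigma M_{I-v}|\to|\Sigma M_{[m]-v}|$ is null, which the paper extracts from Lemmas~\ref{FWF naturality}, \ref{FWF pinch}, and~\ref{phi decomposition}. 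Your sum equation is precisely what one gets after folding all the $(I,v)$-summands of $B$ with the same $v$ onto a single $|\Sigma M_{[m]-v}|$; that fold discards the separation that makes Lemma~\ref{injective} sufficient. Reindexing your argument over pairs, as the paper does, removes your obstacle without any appeal to linear independence in $H_{d-1}(M_{[m]-v};\Z)$.
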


  \begin{proof}

    Note that $m\ge d+2$. Let $\emptyset\ne J\subset I\in S(M)$. By Lemma \ref{link neighborly}, $|M_J|$ is contractible for $|J|\le d$, and $|M_J|$ is contractible or $S^{d-1}$ for $|J|=d+1$. Then by Proposition \ref{I<m}, there is a homotopy equivalence
    \begin{equation}
      \label{RZ}
      \R Z_{M_I}^{d+1}\simeq\bigvee_{v\in I}|\Sigma M_{I-v}|
    \end{equation}
    where $|\Sigma M_{I-v}|$ is contractible or $S^d$ as mentioned above. Let
    \[
      A=\bigvee_{I\in S(M)}\bigvee_{v\in I}|\Sigma M_{I-v}|\quad\text{and}\quad B=\bigvee_{I\in S(M)}\bigvee_{v\in I}|\Sigma M_{[m]-v}|
    \]
    where $A\simeq\bigvee_{I\in S(M)}\R Z_{M_I}^{d+1}$ by \eqref{RZ}. Let $f\colon|M|\to A$ denote the composite of the first three maps in Lemma \ref{phi decomposition}. Then it suffices to show $f$ is null-homotopic. By Lemma \ref{Hopf}, $f$ is identified with some element $\phi$ of $H_d(M;\Z)\otimes H_d(A;\Z)$, so that $f$ is null-homotopic if and only if $\phi=0$.

    As in the proof of Lemma \ref{injective}, $|\Sigma M_{[m]-v}|$ is a wedge of finitely many copies of $S^2$ and $S^d$ for each vertex $v$ of $M$. Let $C_v$ denote the $S^d$-wedge part of $|\Sigma M_{[m]-v}|$. Then there is a projection $q_v\colon B\to C_v$. By Lemmas \ref{FWF naturality}, \ref{FWF pinch} and \ref{phi decomposition}, the composite
    \begin{equation}
      \label{composite}
      |M|\xrightarrow{f}A\to|\Sigma M_{I-v}|\to |\Sigma M_{[m]-v}|
    \end{equation}
    is null homotopic for each $v\in I\in S(M)$. Then by Lemma \ref{Hopf}, $\phi$ is mapped to 0 by
    \[
      1\otimes(q_v\circ j)_*\colon H^d(M;\Z)\otimes H_d(A;\Z)\to H^d(M;\Z)\otimes H_d(C_v;\Z)
    \]
    for each $v\in I\in S(M)$, where $j\colon A\to B$ denotes the inclusion. Since the map
    \[
      \bigoplus_{v\in I\in S(M)}(q_v)_*\colon H_d(B;\Z)\to\bigoplus_{v\in I\in S(M)}H_d(C_v;\Z)
    \]
    is an isomorphism, we get $(1\otimes j_*)(\phi)=0$. Thus we obtain $\phi=0$ by Lemma \ref{injective}, completing the proof.
  \end{proof}

  Now we are ready to prove Theorem \ref{main general}:

  \begin{proof}
    [Proof of Theorem \ref{main general}]
    The implications (1) $\Rightarrow$ (2) $\Leftarrow$ (3) are proved by Theorems \ref{preGolod} and \ref{tight vs tight-neighborly}. The implication (3) $\Rightarrow$ (4) is proved by Propositions \ref{I<m} and \ref{trivial K}. If (4) holds, then by Theorem \ref{FWF decomposition}, $Z_M$ is a suspension. So by the fact that $K$ is $\F$-Golod whenever $Z_K$ is a suspension, as mentioned in Section \ref{introduction}, we obtain the implication (4) $\Rightarrow$ (1), completing the proof.
  \end{proof}


  \section{Further problems}\label{problems}

  This section proposes possible problems for a further study on a relationship of Golodness and tightness. As proved in Theorem \ref{preGolod}, weak $\F$-Golodness implies $\F$-tightness for a closed connected $\F$-orientable manifold triangulations. But not all weakly $\F$-Golod complexes are $\F$-tight. For example, if $K$ is the join of a vertex and the boundary of a simplex, then it is $\F$-Golod for any field $\F$ as the fat-wedge filtration of $\R Z_K$ is trivial but it is not $\F$-tight as in the proof of Lemma \ref{link neighborly}. Then it is natural to ask:

  \begin{problem}
    \label{problem 1}
    For which simplicial complexes does weak $\F$-Golodness imply $\F$-tightness?
  \end{problem}

  Interestingly, the opposite implication always holds.

  \begin{proposition}
      Let $K$ be a simplicial complex with vertex set $[m]$. If $K$ is $\F$-tight, then it is weakly $\F$-Golod.
  \end{proposition}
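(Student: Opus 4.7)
By Lemma~\ref{Hochster}, the multiplication on the Koszul homology $\mathcal{H}_*(\F[K])$ decomposes into components indexed by ordered pairs of subsets of $[m]$: the component for $(I,J)$ vanishes whenever $I\cap J\ne\emptyset$, and equals the map induced by $\iota_{I,J}$ when $I\cap J=\emptyset$. Thus $K$ is weakly $\F$-Golod if and only if $\iota_{I,J}^*\colon\widetilde{H}^*(K_I*K_J;\F)\to\widetilde{H}^*(K_{I\sqcup J};\F)$ is the zero map for every pair of disjoint nonempty subsets $I,J\subset[m]$. The plan is to establish this vanishing directly from $\F$-tightness.

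Work at the simplicial cochain level, ordering the vertex set so that elements of $I$ precede those of $J$. Given cocycles $f\in Z^p(K_I;\F)$ and $g\in Z^q(K_J;\F)$, the class $\iota_{I,J}^*([f]\times[g])$ is represented by the \emph{join cochain} $f*g\in C^{p+q+1}(K_{I\sqcup J};\F)$ defined on simplices $\tau$ with $|\tau\cap I|=p+1$ and $|\tau\cap J|=q+1$ by $(f*g)(\tau)=f(\tau\cap I)\,g(\tau\cap J)$. Extend $f$ and $g$ by zero to cochains $\tilde f,\tilde g\in C^*(K;\F)$. Because $I\cap J=\emptyset$, the Alexander--Whitney cup product satisfies $\tilde f\cup\tilde g=0$ identically in $C^{p+q}(K;\F)$: on an ordered simplex $[v_0,\ldots,v_{p+q}]$, nonvanishing would force $v_p\in I\cap J$. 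The Leibniz rule applied to $\tilde f\cup\tilde g=0$ then gives $d\tilde f\cup\tilde g=(-1)^{p+1}\tilde f\cup d\tilde g$, and a direct computation confirms that $(\tilde f\cup d\tilde g)|_{K_{I\sqcup J}}=f*g$.

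By $\F$-tightness and the universal coefficient theorem, the restriction $H^*(K;\F)\twoheadrightarrow H^*(K_L;\F)$ is surjective for every nonempty $L\subset[m]$. Applying this to $L=I$ and $L=J$, and adjusting $f$ and $g$ within their cohomology classes, we obtain cocycle extensions $\bar f\in Z^p(K;\F)$ and $\bar g\in Z^q(K;\F)$ with $\bar f|_{K_I}=f$ and $\bar g|_{K_J}=g$. Setting $\xi=\bar f-\tilde f$ and $\eta=\bar g-\tilde g$ gives $\xi|_{K_I}=0$, $\eta|_{K_J}=0$, $d\xi=-d\tilde f$, and $d\eta=-d\tilde g$. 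Starting from $d(\xi\cup\bar g)=-d\tilde f\cup\bar g$ and expanding $\bar g=\tilde g+\eta$, one combines the Leibniz identity with $\tilde f\cup\tilde g=0$ to obtain
\[
 \tilde f\cup d\tilde g\;=\;(-1)^p\,d(\xi\cup\bar g)+(-1)^p\,d\tilde f\cup\eta
\]
in $C^{p+q+1}(K;\F)$. On restriction to $K_{I\sqcup J}$, the first summand becomes a coboundary on $K_{I\sqcup J}$, so showing $f*g$ is a coboundary reduces to showing the residual $(d\tilde f\cup\eta)|_{K_{I\sqcup J}}$ is a coboundary.

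The main obstacle is handling this residual. The plan is to run the symmetric manipulation with the roles of $f$ and $g$ interchanged (using that $\tilde g\cup\tilde f=0$ by the same vertex-ordering argument), producing a companion identity involving $(d\tilde g\cup\xi)|_{K_{I\sqcup J}}$. Expanding $d(\xi\cup\eta)$ via the Leibniz rule relates the two residuals modulo genuine coboundaries on $K$, and the support conditions $\xi|_{K_I}=0$, $\eta|_{K_J}=0$ together with $I\cap J=\emptyset$ allow the residual, upon restriction to $K_{I\sqcup J}$, to be exhibited as the coboundary of an explicit cochain on $K_{I\sqcup J}$. Once this cancellation is carried out, $f*g$ is a coboundary in $C^{p+q+1}(K_{I\sqcup J};\F)$, and the proof is complete.
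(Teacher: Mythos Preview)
Your cochain-level approach is sound and genuinely different from the paper's proof. The paper argues topologically: after reducing to $I\sqcup J=[m]$, it observes that tightness (via K\"unneth) makes $(j_I*j_J)_*\colon\widetilde H_*(K_I*K_J;\F)\to\widetilde H_*(K*K;\F)$ injective, so it suffices that $(j_I*j_J)\circ\iota_{I,J}\colon|K|\to|K*K|$ be null-homotopic; an explicit linear homotopy on $|K*K|\subset\R^{2m}$ then transfers vertices one at a time from $I$ to $J$ until the map factors through the contractible cone $|v*K_{[m]-v}|$. Your argument instead stays at the cochain level, using tightness to lift $[f]$ and $[g]$ to global cocycles and then exhibiting $f*g$ as an explicit coboundary. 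Both exploit tightness, but in complementary ways---the paper to get injectivity of an induced map in homology, you to get surjectivity of restriction in cohomology.

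Your final paragraph, however, is only a plan, and as written it does not quite close. The Leibniz expansion $d(\xi\cup\eta)=-d\tilde f\cup\eta-(-1)^p\,\xi\cup d\tilde g$ relates your residual $d\tilde f\cup\eta$ to $\xi\cup d\tilde g$, \emph{not} to the term $d\tilde g\cup\xi$ produced by the symmetric manipulation, so that detour does not help. The actual finish is simpler and uses only the support conditions you already isolated: $(\xi\cup d\tilde g)|_{K_{I\sqcup J}}=0$ identically. Indeed, on an ordered simplex $[v_0,\ldots,v_{p+q+1}]$ of $K_{I\sqcup J}$, if $v_p\in I$ then $[v_0,\ldots,v_p]\in K_I$ and $\xi$ vanishes there, while if $v_p\in J$ then $[v_p,\ldots,v_{p+q+1}]\in K_J$ and $d\tilde g$ restricts to $dg=0$ since $g$ is a cocycle on $K_J$. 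Substituting back yields $f*g=(-1)^p\,d\bigl((\xi\cup\tilde g)|_{K_{I\sqcup J}}\bigr)$, which completes your argument.
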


  \begin{proof}
    Take any disjoint subsets $\emptyset\ne I,J\subset[m]$. Then there is a map
    \[
      \iota_{I,J}\colon K_{I\sqcup J}\to K_I*K_J
    \]
    as in Section \ref{weak Golodness}. By Lemma \ref{Hochster}, $K$ is weakly $\F$-Golod if and only if the map $\iota_{I,J}$ is trivial in homology with coefficients in $\F$. Now we suppose $K$ is $\F$-tight. Then $K_{I\sqcup J}$ is $\F$-tight too, and so we only need to consider the case $I\sqcup J=[m]$. By the K\"{u}nneth theorem, the map
    \[
      (j_I*j_J)_*\colon\widetilde{H}_*(K_I*K_J;\F)\to\widetilde{H}_*(K*K;\F)
    \]
    is injective, where $j_I\colon K_I\to K$ denotes the inclusion. Then it suffices to show the composite $(j_I*j_J)\circ\iota_{I,J}$ is null-homotopic.

    Now we may assume $|K|\subset\R^m$ by identifying a simplex $\{i_1,\ldots,i_k\}\in K$ with
    \[
      \{t_1e_{i_1}+\cdots+t_ke_{i_k}\mid t_1+\cdots+t_k=1,\,t_1\ge 0,\,\ldots,t_k\ge 0\}
    \]
    where $e_1,\ldots,e_m$ is the standard basis of $\R^m$. We may assume $|K*K|\subset\R^{2m}$ in the same way. Consider a homotopy $h_t^i\colon\R^{2m}\times[0,1]\to\R^{2m}$ defined by
    \[
      h_t^i(x_1,\ldots,x_m,y_1,\ldots,y_m) =(x_1,\ldots,(1-t)x_i+ty_i,\ldots,x_m,y_1,\ldots,tx_i+(1-t)y_i,\ldots,y_m)
    \]
    for $(x_1,\ldots,x_m,y_1,\ldots,y_m)\in\R^{2m}$. Then $h_t^i$ restricts to a homotopy $h_t^i\colon|K*K|\times[0,1]\to|K*K|$ such that for $i\in I$,
    \[
      (j_I*j_J)\circ\iota_{I,J}=h_0^i\circ(j_I*j_J)\circ\iota_{I,J}\simeq h_1^i\circ(j_I*j_J)\circ \iota_{I,J}=(j_{I-i}*j_{J\cup i})\circ\iota_{I-i,J\cup i}.
    \]
    Thus for $v\in[m]$, $(j_I*j_J)\circ\iota_{I,J}\simeq(j_v*j_{[m]-v})\circ\iota_{v,[m]-v}$. Since $|v*K_{[m]-v}|$ is contractible, we get $(j_I*j_J)\circ\iota_{I,J}\simeq*$, completing the proof.
  \end{proof}

  In this paper, we have been studying a relationship between Golodness and tightness through tight-neighborliness which perfectly works in dimension 3. However, in dimensions $\ge 4$, tight-neighborliness does not work well because it is not equivalent to tightness as mentioned in Section \ref{introduction}. So we consider the following problem, the weak version of which is Problem \ref{problem 1} for manifold triangulations of dimensions $\ge 4$.

  \begin{problem}
    What condition on closed connected $d$-manifold triangulations with $d\ge 4$ guarantees $\F$-Golodness and $\F$-tightness are equivalent?
  \end{problem}

  One approach is to put a topological condition on manifolds. For example, the condition on the Betti number is stated in Section \ref{introduction}. We also have the following theorem, in which manifolds triangulations are not tight-neighborly.

  \begin{theorem}
    \label{d>3}
    Let $M$ be a triangulation of a closed $(d-1)$-connected $2d$-manifold for $d\ge 2$. Then the following are equivalent:
    \begin{enumerate}
      \item $M$ is $\F$-Golod for any field $\F$;

      \item $M$ is $\F$-tight for any field $\F$;

      \item $M$ is $d$-neighborly;

      \item the fat-wedge filtration of $\R Z_M$ is trivial.
    \end{enumerate}
  \end{theorem}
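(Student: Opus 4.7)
The plan is to prove the cycle $(4) \Rightarrow (1) \Rightarrow (2) \Rightarrow (3) \Rightarrow (4)$. Three of these are essentially formal. The implication $(4) \Rightarrow (1)$ is immediate from Theorem \ref{FWF decomposition}: triviality of the fat-wedge filtration yields a wedge decomposition of $Z_M$, so $Z_M$ is a suspension and hence $\F$-Golod for every $\F$. The implication $(1) \Rightarrow (2)$ combines the obvious Golod-implies-weakly-Golod with Theorem \ref{preGolod}, noting that any $(d-1)$-connected manifold with $d \ge 2$ is simply-connected and hence $\F$-orientable. For $(2) \Rightarrow (3)$ I run the argument of Lemma \ref{tight neighborly} in higher dimensions: a minimal non-face $I$ of minimal size $k+1 \le d+1$ would give $M_I = \partial\Delta(I) \simeq S^{k-1}$, whose top homology would have to inject into $H_{k-1}(M;\F)$, which vanishes for $2 \le k \le d$ by $(d-1)$-connectedness (the case $k=1$ is handled by connectedness of $M$).

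The substance is $(3) \Rightarrow (4)$. Assuming $M$ is $d$-neighborly, I will prove $\varphi_{M_I} \simeq *$ for every $\emptyset \ne I \subseteq [m]$ by induction on $|I|$. The preparatory step is to show $|M_I| \simeq \bigvee S^d$ for every proper nonempty $I \subsetneq [m]$. Since $M_I$ is itself $d$-neighborly, it contains the full $d$-skeleton of $\Delta(I)$ and is therefore $(d-1)$-connected, hence simply-connected. Combining Poincar\'e--Lefschetz duality $H^i(M_I;\F) \cong H_{2d-i}(M, M_{[m]-I};\F)$ with the long exact sequence of $(M, M_{[m]-I})$ and the $(d-1)$-connectedness of both $M$ and $M_{[m]-I}$ forces $H^i(M_I;\F) = 0$ for $i \ge d+1$ over every field; the universal coefficient theorem then gives $H_i(M_I;\Z) = 0$ for $i \ge d+1$ with $H_d(M_I;\Z)$ torsion-free, and a minimal CW model yields $|M_I| \simeq \bigvee S^d$. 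Given this, if $\varphi_{M_J} \simeq *$ for all $\emptyset \ne J \subsetneq I \subsetneq [m]$, Theorem \ref{FWF RZ} identifies $\R Z_{M_I}^{|I|-1} \simeq \bigvee_{\emptyset \ne J \subsetneq I} \Sigma|M_J|$, a finite wedge of $(d+1)$-spheres and therefore $d$-connected, so any map from $|M_I| \simeq \bigvee S^d$ into it is null-homotopic. This closes the induction for $I \subsetneq [m]$.

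The case $I = [m]$ is the main obstacle, because $\dim|M| = 2d$ far exceeds the connectivity $d$ of the target, so a dimension count no longer suffices. The same cone-attachment argument identifies $\R Z_M^{m-1} \simeq \bigvee_{\emptyset \ne J \subsetneq [m]} \Sigma|M_J|$, a finite wedge of $(d+1)$-spheres, and Lemma \ref{FWF pinch} ensures each wedge-projection of $\varphi_M$ onto a summand $\Sigma|M_J|$ is null-homotopic. To upgrade this to null-homotopy of $\varphi_M$ itself, I will use that the canonical map $\bigvee_J \Sigma|M_J| \to \prod_J \Sigma|M_J|$ from a finite wedge of $d$-connected spaces to their product is a $(2d+1)$-equivalence, since by the stable splitting its cofiber is a wedge of iterated smash products each at least $(2d+1)$-connected. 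Because $\dim|M| = 2d$, obstruction theory then yields a bijection $[|M|, \R Z_M^{m-1}] \cong \prod_J[|M|, \Sigma|M_J|]$, and the vanishing of every component forces $\varphi_M \simeq *$, establishing triviality of the fat-wedge filtration.
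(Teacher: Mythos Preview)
Your cycle of implications is correct. The three steps $(4)\Rightarrow(1)$, $(1)\Rightarrow(2)$, $(2)\Rightarrow(3)$ are handled exactly as in the paper. For $(3)\Rightarrow(4)$ the paper simply cites \cite[Theorem~1.6]{IK3}, a general criterion from the authors' earlier work on fat-wedge filtrations, whereas you supply a self-contained argument specific to manifolds. Your route---Poincar\'e--Lefschetz duality to force $|M_I|\simeq\bigvee S^d$ for each proper $I$, then a connectivity count to run the induction for $I\subsetneq[m]$, and finally the wedge-to-product $(2d+1)$-equivalence to handle $I=[m]$---is sound and has the virtue of not depending on the external reference. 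The trade-off is that your argument uses the manifold hypothesis essentially (through duality), whereas the cited result presumably applies to arbitrary $d$-neighborly complexes of dimension at most $2d$.

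One point deserves a sentence of justification. In the final step you appeal to Lemma~\ref{FWF pinch} to kill ``each wedge-projection of $\varphi_M$ onto a summand $\Sigma|M_J|$,'' but what that lemma literally gives is that the composite $\pi_J\circ\varphi_M$ is null, where $\pi_J\colon\R Z_M^{m-1}\to\R Z_{M_J}\to|\Sigma M_J|$ is the coordinate-retraction-then-quotient map. You are implicitly identifying $\pi_J$ with the wedge projection arising from your splitting $\R Z_M^{m-1}\simeq\bigvee_{J'}\Sigma|M_{J'}|$. This identification is valid---both source and target are finite wedges of $(d+1)$-spheres, so maps between them are determined by their effect on $\pi_{d+1}=H_{d+1}$, and the two maps agree on $H_{d+1}$ by the Hochster-type splitting underlying Theorem~\ref{BBCG}---but it is worth saying. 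Alternatively, bypass the identification entirely: the map $(\pi_J)_{J\subsetneq[m]}\colon\R Z_M^{m-1}\to\prod_{J}|\Sigma M_J|$ is itself a $(2d+1)$-equivalence (an $H_{d+1}$-isomorphism between simply connected spaces whose homologies coincide through degree $2d+1$), so you may apply Lemma~\ref{FWF pinch} directly to each factor.
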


  \begin{proof}
    The implication (1) $\Rightarrow$ (2) holds by Theorem \ref{preGolod} because $M$ is orientable. Suppose $M$ has a minimal non-face $I$ with $|I|\le d+1$. Then $M_I=\partial\Delta(I)$, implying $H_{|I|-2}(M_I;\F)\ne 0$. Since $M$ is $\F$-tight, the natural map $H_{|I|-2}(M_I;\F)\to H_{|I|-2}(M;\F)$ is injective, and since $M$ is $(d-1)$-connected, $\widetilde{H}_*(M;\F)=0$ for $*<d$. Then we get a contradiction, so we obtain the implication (2) $\Rightarrow$ (3). The implication (3) $\Rightarrow$ (4) follows from \cite[Theorem 1.6]{IK3}. The implication (4) $\Rightarrow$ (1) holds by the fact that $K$ is $\F$-Golod over any field $\F$ whenever $Z_K$ is a suspension, as mentioned in Section \ref{introduction}. Therefore, the proof is complete.
  \end{proof}

\end{document}